\newtheorem{introductiontheorem}{Theorem}
\newtheorem{theorem}{Theorem}[section]
\newtheorem{corollary}[theorem]{Corollary}
\newtheorem{lemma}[theorem]{Lemma}
\newtheorem{proposition}[theorem]{Proposition}
\newtheorem{remark}[theorem]{Remark}
\newtheorem{definition}[theorem]{Definition}
\def\dim{\operatorname{dim}}
\def\id{\operatorname{id}}
\def\deg{\operatorname{deg}}
\def\Proj{\operatorname{Proj}}
\def\Spec{\operatorname{Spec}}
\def\BirMori{\operatorname{BirMori}}
\DeclareMathOperator{\BirMoriB}{\mathbf{BirMori}}
\def\Bir{\operatorname{Bir}}
\def\Aut{\operatorname{Aut}}
\def\PGL{\operatorname{PGL}}
\def\Cr{\operatorname{Cr}}
\newcommand{\OO}{\mathcal{O}}
\newcommand{\ZZ}{\mathbb{Z}}
\newcommand{\QQ}{\mathbb{Q}}
\newcommand{\RR}{\mathbb{R}}
\newcommand{\CC}{\mathbb{C}}
\newcommand{\PP}{\mathbb{P}}
\newcommand{\FF}{\mathbb{F}}
\renewcommand{\AA}{\mathbb{A}}
\newcommand{\kk}{\textbf{k}}
\newcommand{\isom}{\cong}
\newcommand{\defeq}{\vcentcolon=}
\newcommand{\map}{\smash{\xymatrix@C=0.5cm@M=1.5pt{ \ar[r]& }}}
\newcommand{\rmap}{\smash{\xymatrix@C=0.5cm@M=1.5pt{ \ar@{-->}[r]& }}}
\newcommand{\psmap}{\smash{\xymatrix@C=0.5cm@M=1.5pt{ \ar@{..>}[r]& }}}
\renewcommand{\d}[1]{\ensuremath{\operatorname{d}\!{#1}}}
\newcommand{\freeprod}{\mathop{\scalebox{1.7}{\raisebox{-0.1ex}{$\ast$}}}}
\definecolor{myorange}{HTML}{eb811a}
\title{Rigid birational involutions of $\PP^3$ and cubic threefolds}
\author{Sokratis Zikas}
\date{\today}
\address{Universit\"at Basel, Departement Mathematik und Informatik, Spiegelgasse 1, CH--4051 Basel, Switzerland}
\email{sokratis.zikas@unibas.ch}
\begin{document}

\maketitle

\begin{abstract}
	We construct families of birational involutions on $\PP^3$ or a smooth cubic threefold which do not fit into a non-trivial elementary relation of Sarkisov links. 
	As a consequence, we construct new homomorphisms from their group of birational transformations, effectively re-proving their non-simplicity. We also prove that these groups admit a free product structure.	
	Finally, we produce automorphisms of these groups that are not generated by inner and field automorphisms.
\end{abstract}

\section{Introduction}

\subsection{Homomorphisms from the Cremona group and free product structure}
The Cremona group $\Cr_n(\kk) = \Bir_{\kk}(\PP^n)$ is the group of birational transformations of the projective space $\PP^n$ over a field $\kk$. The study of this group has been a classical problem dating back to $19$th century.

The Cremona group in dimension $2$ over any field $\kk$ is known to be non-simple (see \cite{MR3037611,MR3533276}) i.e., it admits non-trivial homomorphisms to other groups. 
Recently, many families of such homomorphisms were constructed: for example in dimension $2$ over a perfect field by \cite{MR4153105, schneider2021relations} and over a subfield of the complex numbers by \cite{JEP_2020__7__1089_0}  in dimension $3$ and  by \cite{BLZ} in dimension greater or equal to $3$. 
Among other important consequences, the examples in the latter case proved for the first time the non-simplicity of the Cremona group in dimension greater or equal to $3$.

In this paper, we construct uncountable families of involutions of $\PP^3$ over $\CC$, which are Sarkisov links and do not fit into any non-trivial relations of Sarkisov links. 
These are links of Type II of the form
	\[
\xymatrix@R=.25cm{
	X \ar@{..>}[rr]^{\chi_C}\ar[dd] \ar[rd] && X\ar[dd] \ar[ld]\\
	& Z\\
	\PP^3 && \PP^3
}
\]
where $X \to \PP^3$ is a divisorial contraction to a curve $C$ and the central model $Z$ is a sextic double solid, whose covering map induces the involution. The link $\chi_C$ is completely determined by $C$ and the families of these links are parametrized by the Hilbert schemes of these curves.

Using these links we obtain the following result:

\begin{introductiontheorem}\label{main theorem 1}
	There exists a group homomorphism
	\[
	\psi\colon \Cr_3(\CC) \to \freeprod_I \ZZ/2\ZZ,
	\]
	where 
	\begin{enumerate}
		\item the indexing set $I$ parametrizes projective equivalence classes of certain curves and is uncountable (see section \ref{section:homoAndSDProd} for the precise description of $I$);
		\item $\PGL_4(\CC)$ lies in the kernel and
		\item there exist elements $\chi_i \in \Cr_3(\CC)$, $i\in I$, of degree $19$ not in the kernel.
	\end{enumerate}
	
	Moreover, $\psi$ admits a section giving the group $\Cr_3(\CC)$ a semi-direct product structure.
\end{introductiontheorem}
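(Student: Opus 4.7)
The proof will follow the framework developed by Blanc--Lamy--Zimmermann \cite{BLZ} for constructing homomorphisms from higher-dimensional Cremona groups. Recall that the Sarkisov program expresses $\Cr_3(\CC)$ as the fundamental group of a $2$-complex whose vertices are Mori fiber spaces, whose edges are Sarkisov links together with isomorphisms between Mori fiber spaces, and whose $2$-cells are the elementary relations. Equivalently, a homomorphism $\psi\colon \Cr_3(\CC) \to G$ is determined by a choice of image in $G$ for each Sarkisov link and each Mori fiber space isomorphism (with inverse edges sent to inverses), subject to the compatibility requirement that every elementary relation map to the trivial word in $G$.

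My plan is to assign, for each projective equivalence class $[C]\in I$, the generator of the corresponding copy of $\ZZ/2\ZZ$ in $\freeprod_I \ZZ/2\ZZ$ to every link of the form $\chi_{C'}$ with $C'$ projectively equivalent to $C$, and to send every other edge of the $2$-complex to the identity; in particular $\PGL_4(\CC)$ lies in the kernel. The assignment is consistent with inversion because $\chi_C$ is an involution and so is the chosen $\ZZ/2\ZZ$ generator. The key step, and the main obstacle, is to verify compatibility with elementary relations: one must show that whenever a rigid involution $\chi_C$ appears as an edge of an elementary relation, the only other edges of that relation which map to the same generator of $\freeprod_I \ZZ/2\ZZ$ are $\chi_C$ itself and its $\PGL_4(\CC)$-conjugates. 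This is precisely the content of the rigidity property announced for the links $\chi_C$, and its verification should constitute the technical heart of the paper, reducing presumably to a case analysis of the possible elementary relations in which a Type II link through a sextic double solid can participate.

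Once $\psi$ is shown to be a well-defined homomorphism, the section is obtained from the universal property of the free product: send the generator of the $i$-th factor to the involution $\chi_i \in \Cr_3(\CC)$; this extends uniquely to a group homomorphism $\sigma$ since each $\chi_i$ is of order two. By construction $\psi \circ \sigma$ acts as the identity on each generator of $\freeprod_I \ZZ/2\ZZ$, hence $\sigma$ is a section and $\Cr_3(\CC)$ acquires the claimed semi-direct product structure over $\ker \psi$. Uncountability of $I$ follows from the positive-dimensional nature of the Hilbert scheme parametrising the relevant curves $C$, modulo the finite-dimensional action of $\PGL_4(\CC)$; the degree $19$ assertion follows from an explicit computation based on the numerical data of the divisorial contraction $X \to \PP^3$ and the covering involution of the sextic double solid $Z$.
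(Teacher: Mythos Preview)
Your overall strategy matches the paper's: define a map on the generators of $\BirMoriB(\PP^3)$ (Sarkisov links and Mori fiber space isomorphisms), check that all trivial and elementary relations are sent to the identity, then restrict to $\Bir(\PP^3)=\Cr_3(\CC)$. The section argument via the universal property of the free product, and the remarks on uncountability of $I$ and on the degree computation, are also in line with the paper.

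There is, however, a genuine gap in your treatment of the compatibility check. The condition you formulate --- that in any elementary relation containing some $\chi_C$, the other edges mapping to the same $\ZZ/2\ZZ$-generator are $\chi_C$ and its $\PGL_4(\CC)$-conjugates --- is tautological under your own definition of $\psi$, and in any case it is not sufficient: for the image word in $\freeprod_I \ZZ/2\ZZ$ to be trivial one must control the full pattern of occurrences of \emph{all} the $\chi_{C_i}$ appearing in the relation, not merely those in a single projective class. The paper does not attempt any ``case analysis of the possible elementary relations''. Instead it proves the much stronger rigidity statement (Corollary~\ref{relations}): the link $\chi_C$ appears in \emph{no} elementary relation whatsoever. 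This is because no rank~$3$ fibration dominates the rank~$2$ fibration $X/\Spec(\CC)$, which in turn is deduced from Proposition~\ref{bigness}: for \emph{every} terminal $\QQ$-factorial divisorial extraction $W\to X$, the divisor $-K_W$ is not big. The proof of Proposition~\ref{bigness} is the actual technical heart; it splits into three cases according to whether the centre of $W\to X$ is a point (a $(1,a,b)$-blowup), a curve not contracted by $X\to Z$, or a curve contracted by $X\to Z$, and uses crucially that $(-K_X)^3=2$. Once one knows $\chi_C$ lies in no elementary relation, only the trivial relations of Remark~\ref{trivial relations} remain to be checked, and these are sent to the identity by construction (this is also why one works with projective equivalence classes rather than individual curves).
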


This can be thought of as a counterpart to the homomorphisms constructed in \cite{BLZ}. 
It should be noted that their results hold in all dimensions greater than or equal to $3$ and apply to many other classes of varieties;
however, the advantage of our construction lies in the fact that it is quite explicit,
thus proving the non-simplicity of $\Cr_3(\CC)$ in an effective way. 

Theorem \ref{main theorem 1} also provides the first example of a surjective group homomorphism $\Cr_3(\CC) \to \freeprod_I \ZZ/2\ZZ$, where we have specific examples of elements which are known to lie outside the kernel. This also contrasts the situation in dimension $2$ over $\CC$: 
in all proofs of non-simplicity of $\Cr_2(\CC)$, normal subgroups are constructed directly with the corresponding group homomorphism being the quotient one. These constructions are again non-effective, thus no elements of low degree are known to lie in the kernel.

Using a subset of the aforementioned involutions we obtain another structural result. More specifically, let $J$ be the subset of $I$ corresponding to curves which are fixed by no non-trivial automorphism of $\PP^3$, 
and denote by $G$ the subgroup of $\Cr_3(\CC)$ generated by all elements admitting a decomposition into Sarkisov links, none of them equivalent to $\chi_{C_j}$, $j\in J$ (see Remark \ref{equivalent links}) .
We then have the following:

\begin{introductiontheorem}\label{main theorem 2}
	The Cremona group $\Cr_3(\CC)$ can be written as the free product
	\[
	\Cr_3(\CC) = G \ast \left(\freeprod_J \langle \chi_{C_i} \rangle \right) \isom G \ast \left(\freeprod_J \ZZ/2\ZZ\right)
	\]
	where the indexing set $J$ is uncountable.
\end{introductiontheorem}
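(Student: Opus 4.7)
The plan is to construct the natural homomorphism
\[
\Phi \colon G \ast \left(\freeprod_J \langle \chi_{C_j}\rangle\right) \longrightarrow \Cr_3(\CC)
\]
and prove it is an isomorphism, by combining the Sarkisov program with the rigidity of each $\chi_{C_j}$ (the main technical input supplied earlier in this paper). By the Sarkisov program in dimension three, every element of $\Cr_3(\CC)$ factors as a composition of Sarkisov links, and two factorizations are related by a finite sequence of elementary relations. Surjectivity of $\Phi$ is then immediate from the definition of $G$: any $f \in \Cr_3(\CC)$ has a Sarkisov decomposition whose factors are either links equivalent to some $\chi_{C_j}$ with $j\in J$, or links that by definition contribute to $G$; collecting consecutive $G$-factors yields a preimage of $f$ in the free product.

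For injectivity I would argue by contradiction. Suppose a reduced word
\[
w = g_0 \,\chi_{C_{j_1}}\, g_1 \cdots \chi_{C_{j_n}}\, g_n,
\]
with $g_i \in G$ nontrivial for $0 < i < n$, $j_k \in J$, and $j_k \neq j_{k+1}$ whenever $g_k = 1$, satisfies $\Phi(w) = \id$. This equality in $\Cr_3(\CC)$ must be witnessed by applying elementary Sarkisov relations to the concatenated factorization of $w$. The hypothesis that each $\chi_{C_j}$ does not appear in any non-trivial elementary relation rules out every syntactic move involving them except the cancellation $\chi_{C_j}^2 = \id$. Moreover, the condition $j\in J$ means that the stabilizer of $C_j$ in $\PGL_4(\CC)$ is trivial, so the link $\chi_{C_j}$ is equivalent (in the sense of Remark \ref{equivalent links}) to no Sarkisov link other than itself; in particular it cannot be absorbed into the adjacent $G$-factors via conjugation by an automorphism of $\PP^3$. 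Therefore no reduction of $w$ is possible, forcing $n = 0$ and $g_0 = 1$, which is the desired contradiction.

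The main obstacle is making this last argument rigorous: the rigidity hypothesis is local, bearing on a single link at a time, whereas injectivity demands control over \emph{arbitrary} relations in $\Cr_3(\CC)$. The cleanest framework is to model $\Cr_3(\CC)$ as the fundamental group of the square complex of Sarkisov links (in the spirit of Blanc--Lamy--Schneider already invoked for Theorem \ref{main theorem 1}); the rigidity of each $\chi_{C_j}$ then translates into the corresponding edge being disjoint from every non-trivial elementary square, so a Bass--Serre type argument exhibits its $\ZZ/2\ZZ$-stabilizer as a free factor. The section of $\psi$ constructed in Theorem \ref{main theorem 1} should provide additional leverage, since it identifies the quotient $\freeprod_I \ZZ/2\ZZ$ and thereby pins down exactly how the letters $\chi_{C_j}$ interact with the subgroup $G$.
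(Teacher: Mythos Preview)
Your overall strategy---use the Sarkisov presentation, invoke the rigidity of each $\chi_{C_j}$, and exploit the trivial stabilizer of $C_j$---is exactly the one the paper takes. The difference is in how injectivity is established, and here your argument has a genuine gap that you yourself flag.

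The problem is the sentence ``This equality in $\Cr_3(\CC)$ must be witnessed by applying elementary Sarkisov relations to the concatenated factorization of $w$.'' Theorem~\ref{elementary relations are generators} says that every relator is a \emph{product of conjugates} of trivial and elementary relators; it does \emph{not} say that the word $w$ can be simplified to the identity by a sequence of local rewriting moves. So the local fact that $\chi_{C_j}$ participates in no non-trivial elementary relation does not, by itself, prevent $w$ from representing the identity. There is a second, more concrete issue: the letters $g_i$ of your reduced word are already elements of $\Cr_3(\CC)$, not Sarkisov links. To apply the groupoid presentation you must first expand each $g_i$ into a product of links, destroying the reduced-word structure you want to preserve; the argument as written does not survive this expansion.

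The paper sidesteps both issues by never passing through a reduced-word argument. Instead it works directly with the presentation of the groupoid $\BirMori(Y)$: it first prunes the generating set so that the only generator equivalent to $\chi_{C_j}$ is $\chi_{C_j}$ itself (using the trivial-stabilizer hypothesis to kill the relations $a\circ\chi_{C_j}\circ b=\chi_{C_j}$), and then observes that with this generating set the only generating relator touching $\chi_{C_j}$ is $\chi_{C_j}^2=\id$. Since every relator is a product of conjugates of generating relators, and each generating relator lives entirely in a single free factor, the free-product structure follows immediately. This is precisely the ``make the local rigidity global'' step you were looking for, and it requires no Bass--Serre machinery or square-complex model.
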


This is an analogue to \cite[Theorem C]{MR4153105}, where $\Cr_2(\kk)$ is shown to admit a similar free product structure when $\kk$ is a perfect field that admits a Galois extension of degree $8$.
In their construction the involutions $\chi_{C_i}$ are instead replaced by Bertini involutions.
The indexing set corresponds to points of degree $8$ in general position, not fixed by any automorphism of the plane and up to projective equivalence.

We will now briefly discuss the techniques used to produce the aforementioned constructions. The basic idea is to use the Sarkisov program. This is essentially an algorithm which decomposes any birational map between Mori fiber spaces into a sequence of simpler maps called Sarkisov links. The algorithm was proven to hold in dimension $2$, over perfect fields by \cite{MR1422227}, in dimension $3$, over $\CC$ by \cite{Corti} and in dimension greater than or equal to $3$, over $\CC$ by \cite{HM13}.

Using the Sarkisov program we get a set of generators, not quite for $\Cr_n(\kk)$, but for the groupoid $\BirMori_{\kk}(\PP^n)$. This is a groupoid whose objects are Mori fiber spaces birational to $\PP^n$ and whose morphisms are birational maps between them. Once we have a set of generators, we want to know the relations between them. This is made possible by the machinery of \emph{rank r fibrations} developed in \cite{BLZ} based on ideas from \cite{MR3123306}. This gives us a presentation of the groupoid $\BirMori_{\kk}(\PP^n)$, where relations are induced by rank $3$ fibrations. Once we have a presentation, we can construct groupoid homomorphisms to groups or groupoids and restrict them to get group homomorphisms from $\Cr_n(\kk)$.

\subsection{Non-generation of $\Aut(\Cr_3(\CC))$ by inner and field automorphisms}\label{subsection:intro nongeneration}
The group of field automorphisms of $\kk$ acts on $\PP^n_{\kk}$ naturally:
given $\tau \in \Aut(\kk)$ we may define the map $a_\tau$ as
\[
\begin{array}{ccc}
	\PP^n & \to & \PP^n\\
	(x_0:\ldots:x_n) & \mapsto & \big(\tau(x_0):\ldots:\tau(x_n)\big).
\end{array}
\]
Note that this is not a morphism defined over $\Spec(\kk)$. However, $\Aut(\kk)$ acts on the group $\Cr_n(\kk)$ by conjugation. Given a $\tau \in \Aut(\kk)$ we define a group automorphism $b_\tau$ as 
\[
\begin{array}{ccc}
	\Cr_n(\kk) & \to & \Cr_n(\kk)\\
	f & \mapsto & a_\tau \circ f \circ (a_\tau)^{-1}.
\end{array}
\] 
A quick calculation yields that if $f= (f_0:\ldots :f_n)$, where $f_i$ are homogeneous polynomials of the same degree having no common factor, then $b_{\tau}(f) = ({f_0}^{\tau}:\ldots:{f_n}^{\tau})$, where if $f_j = \sum a_Ix^I$, then ${f_j}^{\tau} = \sum \tau(a_I)x^I$. 

In \cite{MR2278755}, the group $\Aut(\Cr_2(\CC))$ was shown to be generated by inner and field automorphisms, that is if $\phi\colon \Cr_2(\CC) \to \Cr_2(\CC)$ is a group homomorphism, then there exists a field automorphism $\tau$ of $\CC$ and an element $g\in \Cr_2(\CC)$ such that for every $f \in \Cr_2(\CC)$ we have
\[
\phi(f) = g\circ b_{\tau}(f) \circ g^{-1}.
\]
It is therefore a natural question to ask whether such a result is true in higher dimensions or over other fields. 
In this text, we give a negative answer in dimension $3$ over $\CC$:

\begin{introductiontheorem}\label{main theorem 3}
	There exist uncountably many automorphisms of $\Cr_3(\CC)$ of arbitrary order which are not generated by inner and field automorphisms.
\end{introductiontheorem}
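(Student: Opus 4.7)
My strategy is to exploit the free product decomposition of Theorem~\ref{main theorem 2} to construct automorphisms by permuting the involutions $\chi_{C_j}$, and then use the homomorphism $\psi$ of Theorem~\ref{main theorem 1} to obstruct them from lying in the subgroup of $\Aut(\Cr_3(\CC))$ generated by inner and field automorphisms. Concretely, for any bijection $\sigma : J \to J$, the universal property of the free product yields a unique endomorphism $\phi_\sigma$ of $\Cr_3(\CC) = G \ast (\freeprod_J \langle \chi_{C_j} \rangle)$ with $\phi_\sigma|_G = \id_G$ and $\phi_\sigma(\chi_{C_j}) = \chi_{C_{\sigma(j)}}$; this is well-defined since each target element is an involution, and $\phi_{\sigma^{-1}}$ is a two-sided inverse. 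Hence $\phi_\sigma \in \Aut(\Cr_3(\CC))$ and its order equals that of $\sigma$, so choosing $\sigma$ to be an $n$-cycle or an infinite-order permutation produces automorphisms of any desired order.

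\textbf{Non-generation.} Suppose for contradiction that $\phi_\sigma = \phi_g \circ b_\tau$ for some $g \in \Cr_3(\CC)$ and $\tau \in \Aut(\CC)$. A direct computation on defining polynomials yields $b_\tau(\chi_{C_j}) = \chi_{C_j^\tau}$, where $C_j^\tau$ is the curve obtained by applying $\tau$ to the coefficients of $C_j$; since the conditions cutting out $J$ (trivial $\PGL_4(\CC)$-stabilizer) are $\Aut(\CC)$-invariant, $C_j^\tau$ corresponds to some $\tau\cdot j \in J$. The hypothesis becomes
\[
\chi_{C_{\sigma(j)}} \;=\; g\,\chi_{C_{\tau\cdot j}}\,g^{-1} \quad \text{in } \Cr_3(\CC), \quad \text{for all } j \in J.
\]
Applying $\psi$ and invoking the standard normal-form fact that distinct generators in a free product $\freeprod_I \ZZ/2\ZZ$ are never conjugate, I deduce $\sigma(j) = \tau\cdot j$ for all $j$. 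But the Hilbert scheme parametrizing our curves contains both curves defined over $\overline{\QQ}$, with countable $\Aut(\CC)$-orbit, and curves with algebraically independent transcendental coefficients, whose orbits have cardinality $2^{\aleph_0}$. Fixing $j_0$ in a countable orbit and letting $j_1$ vary in an uncountable one, the transposition $\sigma = (j_0\ j_1)$ cannot agree with the restriction of any field action; this yields uncountably many pairwise distinct $\phi_\sigma$ of order $2$, and analogous $n$-cycles mixing $\Aut(\CC)$-orbits realize every desired order.

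\textbf{Main obstacle.} The technical heart is the compatibility of $\psi$ with $b_\tau$: I need $\psi \circ b_\tau$ to descend to an automorphism of $\freeprod_I \ZZ/2\ZZ$ sending the generator at index $i$ to the generator at index $\tau\cdot i$, so that applying $\psi$ to the displayed identity produces an actual equation between genuine generators of the target free product. This should follow from the Sarkisov-theoretic construction of $\psi$ in Section~\ref{section:homoAndSDProd}, since field automorphisms act naturally on all the geometric ingredients (curves, sextic double solids, Sarkisov links) used to define $\psi$; nevertheless, verifying this equivariance cleanly at the level of the underlying groupoid presentation is the step I expect to require the most care.
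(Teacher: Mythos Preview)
Your argument is correct and follows the same architectural idea as the paper --- define automorphisms by permuting the free factors $\langle\chi_{C_j}\rangle$ --- but the obstruction step is handled differently.

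\medskip

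\textbf{How the paper argues.} The paper chooses $\rho$ so that some $j_0\in J_{g,d}^{\PP^3}$ is sent to $j_0'\in J_{g',d'}^{\PP^3}$ with $(g,d)\neq(g',d')$. If $\phi(\rho)$ were $b_\tau\circ(\text{conj.\ by }g)$, then evaluating at $\chi_{C_{j_0}}$ and using the rigidity of $\chi_{C_{j_0}}$ (no non-trivial relations, Corollary~\ref{relations}) forces $g=\id$, hence $b_\sigma(\chi_{C_{j_0'}})=\chi_{C_{j_0}}$. Comparing base loci gives $a_\sigma(C_{j_0'})=C_{j_0}$; but $a_\sigma$ preserves genus, contradicting $g\neq g'$. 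The invariant used is simply the \emph{genus}.

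\medskip

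\textbf{What you do differently.} You push the conjugacy relation through $\psi$ into $\freeprod_I\ZZ/2\ZZ$, where non-conjugacy of distinct generators is transparent; this is a clean way to extract $\sigma(j)=\tau\cdot j$ and sidesteps the paper's somewhat terse ``$g=\id$'' step. Your obstruction then distinguishes $\Aut(\CC)$-orbits by cardinality (countable for $\overline{\QQ}$-curves, uncountable otherwise). This works, but is more delicate than necessary: once you have $\sigma(j)=\tau\cdot j$, the paper's genus trick applies immediately, since the $\Aut(\CC)$-action on $J$ preserves each $J_{g,d}$. Taking $j_0\in J_{2,8}$ and $j_1\in J_{6,9}$ (both uncountable by Lemma~\ref{no projective autos}) already suffices.

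\medskip

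\textbf{On your flagged obstacle.} The compatibility of $\psi$ with $b_\tau$ is lighter than you suggest. You only need $\psi(\chi_{C_j^\tau})=1_{[C_j^\tau]}$, and this is immediate from the \emph{definition} of $\psi$ once you observe that $C_j^\tau$ again lies in $\mathcal{S}_{g,d}^{\PP^3}$ and satisfies the open generality conditions of Proposition~\ref{generalities on the involutions} (these are algebraic and $\Aut(\CC)$-equivariant) and has trivial $\PGL_4$-stabiliser (conjugate the stabiliser by $a_\tau$). No equivariance of the full groupoid presentation is required.
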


These automorphisms are constructed using the free product structure on $\Cr_3(\CC)$ of Theorem \ref{main theorem 2}. They act on the generators by exchanging two elements of the form $\chi_{C_j}$ and $\chi_{C_{j'}}$.
The fact that such an automorphism is not inner boils down to the fact that these involutions do not fit into a non-trivial relation of Sarkisov links, while a correct choice of $C_j$ and $C_{j'}$ shows that the automorphism is not a field automorphism up to inner ones.

Finally, in \cite{ContinuousAutomorphisms}, the authors prove that any homeomorphism of $\Cr_3(\kk)$, with respect to either the Zariski or the Euclidean topology, is a composition of an inner and a field automorphism for $\kk = \RR$ or $\CC$. 
Thus our examples constitutes, to our knowledge, the first examples of non-continuous automorphisms of $\Cr_3(\CC)$.

\subsection{Extensions of our results to cubic $3$-folds}

All three of our theorems extend to the case of the group of birational automorphisms of a smooth cubic $3$-fold $Y$. 

For Theorems \ref{main theorem 1} and \ref{main theorem 2}, the same construction applies to any smooth cubic $3$-fold unconditionally. 
In the case of Theorem \ref{main theorem 1}, we note again that the results of \cite{BLZ} still apply to the case of $\Bir_{\CC}(Y)$. Again, the advantage of our result lies in its explicit nature.
For instance, our approach provides examples of elements of order as low as $11$ not in the kernel of the homomorphism $\Bir_{\CC}(Y) \to \freeprod_I \ZZ/2\ZZ$.

Finally, for Theorem \ref{main theorem 3} the action of a field automorphism $\tau$ on $\Aut(\Bir_{\CC}(Y))$ is well defined if and only if $\tau$ preserves $Y$, that is $a_{\tau}(Y) = Y$. Thus the statement of the corresponding theorem must be modified accordingly.

\subsection*{Acknowledgment}I would foremost like to thank \emph{J\'er\'emy Blanc} for suggesting the problem, as well as for his guidance and help throughout. I would also like to thank \emph{St\'ephane Lamy} and \emph{Christian Urech} for suggesting the application to the automorphism group of $\Cr_3(\CC)$, and \emph{Serge Cantat}, \emph{Erik Paemurru},  \emph{Nikolaos Tsakanikas} and \emph{Immanuel van Santen} for the interesting discussions and remarks.
Finally, I would like to thank the anonymous referees for their comments.

This work was supported by the Swiss National Science Foundation Grant ``Birational transformations of threefolds'' $200020\_178807$.

\section{Preliminaries}

In the rest of the paper all varieties and birational maps between them are defined over~$\CC$.

\subsection{Rank $r$ fibrations and elementary relations}

Here, we give a brief account of the theory developed in \cite[Sections 3 and 4]{BLZ}. Any proofs provided here are sketches of the actual proofs found there.

\begin{definition}
	Let $X/B$ be a Mori fiber space with singularities not worse that terminal (terminal Mori fiber space for short). We define $\BirMoriB(X)$ to be the groupoid whose objects are terminal Mori fiber spaces, birational to $X$ and the morphisms between them to be birational maps.
\end{definition}

\begin{definition}
	Let $X/B$ and $X'/B'$ be Mori fiber spaces. An isomorphism between $X$ and $X'$ is called an \textbf{isomorphism of Mori fiber spaces} if there exists an isomorphism between $B$ and $B'$ that makes the induced diagram commute.
\end{definition}

\begin{definition}\label{rank r fibration}
	Let $r \geq 1$ be an integer. A morphism $\eta \colon X \map B$ is a \textbf{rank $\boldsymbol{r}$ fibration} if the following conditions hold:
	\begin{enumerate}
		\item the fiber space $X/B$ given by $\eta$ is a relative Mori Dream Space (see \cite[Definition 2.2]{BLZ});
		\item $\dim X > \dim B$ and $\rho(X/B) = r$;
		\item $X$ is $\QQ$-factorial and terminal and for any divisor $D$ on $X$, the output of
		any $D$-MMP over $B$ is still $\QQ$-factorial and terminal.
		\item There exists an effective $\QQ$-divisor $\Delta_B$ such that the pair $(B,\Delta_B)$ is klt.
		\item The anticanonical divisor of $X$ is $\eta$-big.
	\end{enumerate}

	We say that a rank $r$ fibration $X/B$ \textbf{dominates} a rank $r'$ fibration $X'/B'$ if we have a commutative diagram
	\[
	\xymatrix@R=0.1cm{
	X \ar[rrr] \ar@{-->}[dr]&&& B\\
	& X' \ar[r] & B' \ar[ur]
	}
	\]
	where $X \rmap X'$ is a birational contraction and $B' \map B$ is a morphism with connected fibres.
\end{definition}

\begin{remark}\label{rank 1 and 2 fibrations}
	A rank $1$ fibration $\eta\colon X \map B$ is a terminal Mori fibre space. Indeed, the only thing left to check is the relative ampleness of the anti-canonical divisor. However, since $-K_X$ is $\eta$-big, we may write
	\[
	-K_X \equiv A + E,
	\]
	where $A$ is $\eta$-ample and $E$ is effective. Since $\rho(X/B) = 1$, $E$ is either $\eta$-nef or $\eta$-anti-nef. Since the contracted curves cover $X$, an effective divisor cannot be $\eta$-anti-nef, thus $E$ is $\eta$-nef and subsequently, $-K_X$ is $\eta$-ample.
	
	Similarly, rank $2$ fibrations correspond to Sarkisov links between two Mori fibre spaces in the following manner:
	
	If $X/B$ is a rank $2$ fibration then we may run a $(-A)$-MMP over $B$ for any ample divisor $A$. Then since $\rho(X/B) = 2$, at the first step we have a choice between 2 rays to contract giving us 2 different MMPs. Since $\kappa(-A) = -\infty$, the output of both MMPs must be rank $1$ fibrations, which correspond to Mori fibre spaces.
	
	On the other hand, let
	\[
	\xymatrix@C=1.2cm@R=0.3cm{
		X_m\ar[d] \ar[rrrdd] & \dots \ar@{..>}[l] \ar[rrdd] & X_0 \ar[rd] \ar@{..>}[l] \ar@{<..>}[rr] & & Y_0 \ar[ld] \ar@{..>}[r] &\dots \ar@{..>}[r] \ar[lldd]&Y_n \ar[d] \ar[llldd]\\
		\ar[rrrd]&&& Z \ar[d] &&& \ar[llld]\\
		&&& B &&&
	}
	\]
	be a Sarkisov diagram, where $X_0 \psmap Y_0$ is either a flop or an isomorphism. Then $X_0/B$ is weak Fano thus a Mori Dream Space. Moreover $X_0$ is $\QQ$-factorial and terminal and the output of any MMP is among the maximal dimensional varieties appearing in  the diagram, which by assumption are all $\QQ$-factorial and terminal. Finally, the fact that $B$ is klt is proven in \cite[Corollary 4.6]{Fujino}.
	
	The correspondence above is not \emph{one-to-one}, namely a rank $2$ fibration gives rise to a Sarkisov link and its inverse, up to Mori fiber space isomorphisms. On the other hand, in the Sarkisov diagram above, all $X_i/B$ and $Y_i/B$ are rank $2$ fibrations.
\end{remark}

\begin{proposition}[{\cite[Proposition 4.3]{BLZ}}]\label{rank 3 fibrations correspond to elementrary relations}
	Let $X \map B$ be a rank $3$ fibration. Then there are only finitely many rank $2$ fibrations, corresponding to Sarkisov links $\chi_i$ (up to Mori fiber space isomorphisms), dominated by $X/B$, and they fit in a relation 
	\[
	\chi_t\circ \dots \circ \chi_1 = id.
	\]
	
\end{proposition}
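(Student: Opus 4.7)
The strategy is to extract the relation from the Mori chamber decomposition of the relative effective cone of $X/B$. Since $X/B$ is a relative Mori Dream Space (condition (1) of Definition \ref{rank r fibration}) with $\rho(X/B)=3$, the cones $\overline{\operatorname{Eff}}(X/B)$ and $\overline{\operatorname{Mov}}(X/B)$ are three-dimensional rational polyhedral cones admitting a finite chamber decomposition. I would begin by taking a two-dimensional cross-section, so that the picture becomes a convex polygon subdivided into finitely many sub-polygons. The top-dimensional chambers contained in $\overline{\operatorname{Mov}}(X/B)$ correspond bijectively to the small $\QQ$-factorial modifications $X \dashrightarrow X_i$ over $B$, each of which is $\QQ$-factorial and terminal by condition (3) of Definition \ref{rank r fibration}.

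Next I would classify the walls. Those between two adjacent top-dimensional chambers lying inside $\overline{\operatorname{Mov}}(X/B)$ correspond to flops between the $X_i$'s and do not yield rank $2$ fibrations. Walls on the boundary of $\overline{\operatorname{Mov}}(X/B)$ but interior to $\overline{\operatorname{Eff}}(X/B)$ correspond to divisorial contractions $X_i \to X'$ over $B$, producing a rank $2$ fibration $X'/B$ with $\rho(X'/B)=2$. Walls on the boundary of $\overline{\operatorname{Eff}}(X/B)$ correspond to Mori fibrations $X_i \to B'$ with $B' \to B$ of relative Picard rank $1$, producing a rank $2$ fibration $X_i/B'$. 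Conversely, any rank $2$ fibration dominated by $X/B$ is obtained in this way from a unique boundary wall, by tracing back the chamber associated to the class of any $\eta$-ample divisor on $X_i$. Finiteness of the Mori chamber decomposition then yields finiteness of the rank $2$ fibrations dominated by $X/B$ up to Mori fiber space isomorphism, and by Remark \ref{rank 1 and 2 fibrations} each such fibration corresponds to a Sarkisov link $\chi_i$.

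To construct the relation, enumerate the boundary edges of the cross-section polygon of $\overline{\operatorname{Mov}}(X/B)$ cyclically as $e_1, \ldots, e_t$. The vertices of this polygon (the codimension $2$ extremal rays of $\overline{\operatorname{Mov}}(X/B)$) correspond to the rank $1$ fibrations dominated by $X/B$, i.e., Mori fiber spaces, and each link $\chi_i$ attached to $e_i$ is a Sarkisov link between the two Mori fiber spaces sitting at the endpoints of $e_i$. Starting from the vertex shared by $e_1$ and $e_t$, applying $\chi_1, \ldots, \chi_t$ in turn traces the polygon boundary exactly once and returns to the starting vertex, so that the composition $\chi_t \circ \cdots \circ \chi_1$ is the identity morphism on the starting Mori fiber space inside $\BirMoriB(X)$.

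The main obstacle will be the careful bookkeeping at the boundary walls, in particular distinguishing the divisorial contraction walls from the Mori fibration walls and tracking which $X_i$ sits on each side of each wall. Verifying that the circular composition truly equals the identity in the groupoid $\BirMoriB(X)$, rather than merely a birational self-map of some Mori fiber space, requires invoking canonical Mori fiber space isomorphisms identifying the endpoint of $\chi_i$ with the starting point of $\chi_{i+1}$ at each vertex of the polygon. Here conditions (3)--(5) of Definition \ref{rank r fibration}, which ensure that every MMP output remains $\QQ$-factorial and terminal and that the base is klt, are crucial for the uniqueness and rigidity of the chamber/wall dictionary that powers this identification.
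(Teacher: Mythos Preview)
The paper does not give its own proof of this proposition: it is stated with a direct citation to \cite[Proposition 4.3]{BLZ} and no argument, in keeping with the remark at the start of the preliminaries that the section only sketches results from \cite{BLZ}. Your proposal is in fact a faithful outline of the proof found in the cited reference: one works with the Mori chamber decomposition of $\overline{\operatorname{Eff}}(X/B)$, passes to a two-dimensional slice since $\rho(X/B)=3$, identifies the chambers inside $\overline{\operatorname{Mov}}(X/B)$ with the small $\QQ$-factorial modifications of $X$, reads off the dominated rank $2$ fibrations from the boundary walls (divisorial on $\partial\operatorname{Mov}\cap\operatorname{int}\operatorname{Eff}$, fibering on $\partial\operatorname{Eff}$), and obtains the relation from the cyclic ordering of the edges of the polygon. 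So your approach is correct and coincides with that of the source the paper defers to.

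One small point of bookkeeping worth tightening: you write that the vertices of the movable polygon correspond to the rank $1$ fibrations. In the actual argument in \cite{BLZ} one has to be a little more careful, since at a vertex lying on $\partial\overline{\operatorname{Eff}}(X/B)$ the associated contraction can be of fibering type on one side and divisorial on the other, and the Mori fibre space sitting there may arise only after first performing a divisorial contraction. Your last paragraph acknowledges exactly this kind of issue, so you are aware of it; just be sure that in a full write-up the case analysis at each vertex matches the one in \cite[\S4]{BLZ}.
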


\begin{definition}
	A \textbf{trivial relation} between Sarkisov links is a relation of one of the following forms
	\[
	\phi^{-1} = \psi \quad \text{ and } \quad \alpha \circ \phi \circ \beta = \psi,
	\]
	where $\phi, \psi$ are Sarkisov links and $\alpha, \beta$ are isomorphisms of Mori fiber spaces.
	
	An \textbf{elementary relation} between Sarkisov links  is one that arises from a rank $3$ fibration (see Proposition \ref{rank 3 fibrations correspond to elementrary relations}).
\end{definition}

\begin{theorem}[{\cite[Theorem 1.1]{HM13}, \cite[Theorem 4.28]{BLZ}}]\label{elementary relations are generators}
	Let $X/B$ be a terminal Mori fibre space.
	\begin{enumerate}
		\item The groupoid $\BirMori(X)$ is generated by Sarkisov links and isomorphisms of Mori fiber spaces.
		\item Any relation between Sarkisov links in $\BirMori(X)$ is generated by trivial and elementary relations.
	\end{enumerate}
\end{theorem}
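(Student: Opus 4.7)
My plan is to prove the two parts separately, following the classical Sarkisov strategy for (1) and the 2-complex formalism of rank-$r$ fibrations for (2).

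For (1), I start with a birational map $\varphi\colon X/B \dashrightarrow X'/B'$ between terminal Mori fiber spaces, fix a very ample linear system $\mathcal{H}'$ on $X'$, and consider its strict transform $\mathcal{H}_X$ on $X$. Attach to $\varphi$ the Sarkisov degree, a lexicographically ordered triple $(\mu,c,e)$ where $\mu$ is the unique rational number such that $\mu K_X + \mathcal{H}_X$ is numerically trivial on the generic fiber of $X/B$, $c$ is the canonical threshold of the pair $(X, \tfrac{1}{\mu}\mathcal{H}_X)$, and $e$ counts the number of divisorial valuations computing $c$. Next I would invoke a Noether--Fano--Iskovskikh type criterion: if $\varphi$ is not an isomorphism of Mori fiber spaces, then $c \le 1/\mu$ and the pair $(X, \tfrac{1}{\mu}\mathcal{H}_X)$ admits a maximal singularity along some divisorial valuation $E$. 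Extract $E$ and run the two-ray game it generates over $B$ (equivalently, a suitable $(K_X+\tfrac{1}{\mu}\mathcal{H}_X)$-MMP); the output is a Sarkisov link $\chi$ such that $\varphi\circ\chi^{-1}$ has strictly smaller Sarkisov degree. Termination then follows from the descending chain condition on this degree, which is where one invokes the boundedness inputs of \cite{HM13}.

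For (2), I would use the rank-$r$ formalism recalled in Remark \ref{rank 1 and 2 fibrations} and Proposition \ref{rank 3 fibrations correspond to elementrary relations}. Build a 2-complex $\mathcal{C}$ whose vertices are terminal Mori fiber spaces birational to $X$ (i.e.\ rank 1 fibrations), whose edges are Sarkisov links modulo isomorphism of Mori fiber spaces (i.e.\ rank 2 fibrations), and whose 2-cells are the elementary relations arising from rank 3 fibrations. Part (1) already implies that $\mathcal{C}$ is connected, so the content of (2) is that $\mathcal{C}$ is simply connected. Given a relation $\chi_t\circ\cdots\circ\chi_1=\mathrm{id}$ viewed as a loop in $\mathcal{C}$, I would re-run the Sarkisov algorithm on each link while keeping track of a single auxiliary linear system globally, producing a refinement of the loop that still closes up. The key observation is that whenever two consecutive links decompose the same Sarkisov step, the ambiguity is encoded in a rank $3$ fibration dominating both, and hence fillable by the elementary relation it produces.

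The hard part is precisely showing that every loop in $\mathcal{C}$ bounds a disk made of elementary relations: one must produce enough rank 3 fibrations to interpolate between any two Sarkisov-compatible decompositions of the same birational map. Concretely, this requires analyzing what happens when the Sarkisov degree can drop via two different maximal singularities simultaneously, and verifying that the resulting ambiguity is absorbed by a rank 3 fibration dominating the entire configuration. This is exactly the role played by conditions (1) and (3) of Definition \ref{rank r fibration}: the relative Mori Dream Space hypothesis provides the finiteness needed to enumerate the links dominated by a rank 3 fibration, while the preservation of terminality and $\mathbb{Q}$-factoriality along any MMP ensures that the combinatorial argument stays in the category where the Sarkisov degree is well defined. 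Once this local filling is established, a standard induction on the number of links in the relation completes the proof.
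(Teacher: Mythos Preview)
The paper does not prove this theorem at all: it is stated as a citation of \cite[Theorem 1.1]{HM13} and \cite[Theorem 4.28]{BLZ} and used as a black box. So there is no ``paper's own proof'' to compare against, only the cited originals.

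Your sketch for (1) is the classical Corti strategy (Sarkisov degree $(\mu,c,e)$, extract a maximal singularity, two-ray game, induction on the degree). This is \emph{not} the argument of \cite{HM13}, and the gap is substantive: termination of the Sarkisov degree in dimension $\ge 4$ is not known by this route, essentially because one cannot bound the number $e$ of crepant divisors in the absence of the dimension-$3$ classification. Hacon--McKernan bypass the Sarkisov degree entirely: they fix a klt pair $(Z,\Phi)$ resolving all the relevant models, use finiteness of ample models to partition a polytope of boundaries into finitely many chambers, and read off a factorization into Sarkisov links from a path through the chambers. Your phrase ``the boundedness inputs of \cite{HM13}'' suggests a misconception---\cite{HM13} does not supply a missing lemma for Corti's scheme, it replaces the scheme.

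For (2) your framework (the $2$-complex with vertices rank $1$ fibrations, edges rank $2$ fibrations, $2$-cells from rank $3$ fibrations) is exactly that of \cite{BLZ}, but your proposed filling mechanism is not theirs and, as written, is not a proof. You propose to ``re-run the Sarkisov algorithm on each link'' and fill ambiguities coming from competing maximal singularities by rank $3$ fibrations; but you have not said why a loop of links can always be contracted this way, nor why the rank $3$ fibration dominating ``both decompositions'' exists. In \cite{BLZ} simple connectedness comes from convex geometry: one works in the same polytope of boundaries as in \cite{HM13}, shows that the $2$-skeleton of its chamber decomposition is simply connected because the polytope is, and then identifies codimension-$2$ faces with rank $3$ fibrations. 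The induction you allude to would need this polytope argument (or an equivalent) to get off the ground.
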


\begin{remark}\label{equivalent links}
	The first part of the theorem is due to \cite{HM13}. The original version does not mention the isomorphisms of Mori fiber spaces, which are however implicit in their proof.
	Note that an isomorphism between the total spaces of two Mori fiber spaces which is not a Mori fiber space isomorphism is a non-trivial Sarkisov link.
	
	Similarly, the second part of the original theorem in \cite{BLZ} does not mention the trivial relations as generators. These are indeed ``trivial'' from a birational point of view. For our purposes though, we will need a slightly more accurate statement and so we explain the subtleties.
	
	For the first type of trivial relation, a rank $2$ fibration corresponds to a unique Sarkisov diagram up to composition with Mori fiber space isomorphisms on the left and right. However, as already discussed in Remark \ref{rank 1 and 2 fibrations}, a Sarkisov diagram is not directed and thus corresponds to both a link and its inverse.
	The second type of relation is just a by-product of not working up to Mori fiber space isomorphism. 
	
	Note that a trivial relation involving a link $\chi$ arises from the rank $2$ fibration that corresponds to $\chi$ up to orientation and compositions with isomorphisms.
	Thus, by definition, an elementary relation cannot be a trivial one.

	With that in mind, we will say that two Sarkisov links $\phi$ and $\psi$ are \textbf{equivalent} if there exist $\alpha, \beta$, isomorphisms of Mori fiber spaces such that $\alpha \circ \phi \circ \beta = \psi$.
\end{remark}

\subsection{Weighted blowups}

\begin{definition}\label{weighted blowup}
	Let $\mathbf{w} = (w_1,\dots,w_n)$ be positive integers. Define the $\CC^*$-action on $\AA^{n+1}$ by
	\[
	\lambda\cdot (u,x_1,\dots,x_n) = (\lambda^{-1}u,\lambda^{w_1}x_1,\dots,\lambda^{w_n}x_n).
	\]
	The morphism from the geometric quotient $T \defeq \AA^{n+1}/\CC^* \to \AA^n$ defined by
	\[
	\begin{array}{ccc}
		T & \map & \AA^n\\
		(u:x_1:\dots:x_n) & \mapsto & (u^{w_1}x_1,\dots, u^{w_n}x_n)
	\end{array}
	\]
	is called the \textbf{standard} $\mathbf{w}$\textbf{-blowup} of $\AA^n$ at the origin.
	
	Let $f\colon E \subset X \to p \in Y$ be a morphism contracting a divisor $E$ to a smooth point $p$. We say that $f$ is a $\mathbf{w}$\textbf{-blowup} of $Y$ at $p$ if there exists an analytic neighbourhood $(U,p) \isom (\AA^n,0)$ of $p$ such that  the restriction $f|_{f^{-1}(U)}\colon f^{-1}(U) \to U$ is the standard $\mathbf{w}$-blowup of $\AA^n$ at $0$.
\end{definition}

\begin{lemma}\label{Discrepancy of (1,a,b)-blowup}
	Let $p \in Y$ be a smooth point of a $3$-fold and let $\pi\colon (X,E) \map (Y,p)$ be a $(1,a,b)$-blowup of $Y$ at $p$. Then the ramification formula takes the form
	\[
	K_X = \pi^*K_Y + (a+b)E.
	\]
\end{lemma}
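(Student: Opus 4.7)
The plan is to reduce the statement to an explicit local calculation using the definition of a weighted blowup. Since the formula is local and analytic around $p$, by Definition \ref{weighted blowup} I may assume that $Y = \AA^3$, that $p$ is the origin, and that $\pi\colon X \to Y$ is the standard $(1,a,b)$-blowup, i.e.\ the morphism $T = \AA^4/\CC^* \to \AA^3$ with coordinates $(u:x_1:x_2:x_3)$ and weights $(-1,1,a,b)$ given by $(u x_1, u^a x_2, u^b x_3)$.

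The key idea is to work in the single affine chart of $T$ where $x_1 \neq 0$. Since $x_1$ has weight $1$, the $\CC^*$-action on this open set is free and setting $x_1 = 1$ provides a section, identifying this chart with $\AA^3$ with smooth coordinates $(u,x_2,x_3)$. In particular $X$ is smooth along the dense open subset $E \cap \{x_1 \neq 0\}$ of the exceptional divisor, and $E$ is cut out there by the reduced equation $\{u=0\}$. The restriction of $\pi$ to this chart is simply
\[
(u,x_2,x_3) \longmapsto (u,\, u^a x_2,\, u^b x_3).
\]

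The computation is then a one-line Jacobian: pulling back the generator $\omega = dy_1 \wedge dy_2 \wedge dy_3$ of $\omega_Y$ I get $\pi^*\omega = u^{a+b}\, du \wedge dx_2 \wedge dx_3$. Since $du \wedge dx_2 \wedge dx_3$ generates $\omega_X$ in this chart and $E$ has multiplicity $1$ there, I read off that the order of vanishing of $\pi^*\omega$ along $E$ is exactly $a+b$, which gives the discrepancy and hence the ramification formula $K_X = \pi^*K_Y + (a+b)E$.

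The only subtlety to address is that the discrepancy along $E$ is a divisorial invariant and can therefore be computed in any open subset meeting $E$; the chosen chart intersects $E$ in the complement of the two coordinate points $\{x_1 = 0\}$ of $E \cong \PP(1,a,b)$, which is dense, and it is smooth, so one never has to deal with the cyclic quotient singularities that may appear in the charts $\{x_2 \neq 0\}$ or $\{x_3 \neq 0\}$. This is the main (and essentially only) conceptual point; the rest is the determinant computation above.
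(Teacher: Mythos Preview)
Your proof is correct and follows essentially the same approach as the paper: reduce to the standard $(1,a,b)$-blowup of $\AA^3$, work in the smooth chart $\{x_1\neq 0\}$, and compute the Jacobian of the map $(u,x_2,x_3)\mapsto (u,u^a x_2,u^b x_3)$ to read off the order $a+b$ of vanishing of the pulled-back $3$-form along $E$. Your explicit justification that the discrepancy can be computed in this single chart (since it meets $E$ in a dense open where $X$ is smooth) is a helpful addition that the paper leaves implicit.
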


\begin{proof}
	Since this is something that can be checked locally, up to local analytic isomorphism we may assume that $(Y,p) = (\AA^3,0)$, $X$ is the quotient $\AA^4/\CC^*$ under the action 
	\[
	\lambda\cdot (u,x_1,x_2,x_3) = (\lambda^{-1}u,\lambda x_1, \lambda^a x_2, \lambda^b x_3)
	\]
	and $\pi$ is given by $(u:x_1:x_2:x_3) \mapsto  (ux_1,u^ax_2,u^bx_3)$.
	
	Let $U_1$ be the open subset $\{x_1 \neq 0\} \subset X$, isomorphic to $\AA^3$. If we denote the composition
	\[
	\begin{array}{ccccc}
		\AA^3 & \map & U_1 \subset X & \map & \AA^3\\
		(v,y_1,y_2) & \mapsto & \left(v:1:y_1:y_2\right) & \mapsto & (v,y_1v^a,y_2v^b).
	\end{array}
	\]
	by $\psi$ then we may calculate that
	\[
	\psi^*(1\d x_1\wedge \d x_2 \wedge \d x_3) = v^{a+b}\d v\wedge \d y_1 \wedge \d y_2.
	\]
	Taking the divisor of this $3$-form we conclude.
\end{proof}

\begin{lemma}\label{intersection with exceptional}
	Let $p \in Y$ be a smooth point of a $3$-fold and let $\pi \colon E\subset X \map p \in Y$ be a $(1,a,b)$-blowup of $Y$ at $p$. Let $\Gamma$ be a curve in $Y$ which is a complete intersection in an affine neighbourhood $U$ of $p$. Choose generators $f_1$ and $f_2$ for the ideal of regular functions on $U$ vanishing along $\Gamma$. We then have
	\[
	E \cdot \tilde{\Gamma} = \frac{v_E(f_1)\cdot v_E(f_2)}{a b},
	\]
	where $\tilde{\Gamma}$ denotes the strict transform of $\Gamma$, $v_E$ is the divisorial valuation defined by $E$ and $f_1$ and $f_2$ are considered as rational functions on $X$.
\end{lemma}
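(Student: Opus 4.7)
The claim is local at $p$, so by Definition~\ref{weighted blowup} I would reduce to the case $(Y, p) = (\AA^3, 0)$ with $\pi$ the standard $(1, a, b)$-blowup. Writing $D_i = V(f_i)$ and $n_i = v_E(f_i)$, the definition of the divisorial valuation gives $\pi^{\ast} D_i = \widetilde{D}_i + n_i E$ as Cartier divisors on $X$.

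The engine of the proof is a double application of the projection formula. Since $\pi$ contracts $E$ to a point, $\pi_{\ast} E = 0$ and $\pi_{\ast}(E \cdot E) = 0$, so
\[
(\pi^{\ast} D_1) \cdot (\pi^{\ast} D_2) \cdot E = 0 \quad \text{and} \quad (\pi^{\ast} D_i) \cdot E^2 = 0.
\]
Expanding the second identity yields $\widetilde{D}_i \cdot E^2 = -n_i E^3$; plugging this into the expansion of the first and collecting terms gives the numerical identity
\[
\widetilde{D}_1 \cdot \widetilde{D}_2 \cdot E = n_1 n_2 \, E^3.
\]

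To evaluate $E^3$, I would use that the exceptional divisor is the weighted projective plane $E \cong \PP(1, a, b)$ and that its normal bundle satisfies $\OO_X(E)|_E \cong \OO_{\PP(1,a,b)}(-1)$, both of which can be read off from the charts of Definition~\ref{weighted blowup} in the spirit of the proof of Lemma~\ref{Discrepancy of (1,a,b)-blowup}. The standard degree formula on weighted projective space, $\OO_{\PP(1,a,b)}(1)^2 = \frac{1}{ab}$, then gives $E^3 = \frac{1}{ab}$, and hence $\widetilde{D}_1 \cdot \widetilde{D}_2 \cdot E = \frac{n_1 n_2}{ab}$.

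The last step is to identify $\widetilde{D}_1 \cdot \widetilde{D}_2$ with $\widetilde{\Gamma}$ as $1$-cycles. Away from $E$ this is immediate, because $\pi$ is an isomorphism there and $\Gamma = D_1 \cap D_2$ is a reduced complete intersection cut out by $f_1, f_2$. The main obstacle I expect is handling the possible presence of excess components of $\widetilde{D}_1 \cdot \widetilde{D}_2$ supported on $E$; this is controlled by the weighted initial forms of $f_1$ and $f_2$ on $E \cong \PP(1,a,b)$, and once one knows these cut out only the $0$-dimensional locus $\widetilde{\Gamma} \cap E$, the identification $\widetilde{D}_1 \cdot \widetilde{D}_2 = \widetilde{\Gamma}$ follows and combines with the above numerical identity to give $E \cdot \widetilde{\Gamma} = \frac{n_1 n_2}{ab}$.
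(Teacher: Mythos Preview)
Your argument is correct and takes a genuinely different route from the paper's. The paper works entirely in explicit coordinates: it expands each $f_n$ into weighted-homogeneous pieces, identifies $v_E(f_n)$ with the lowest weight $k_n$, writes down the defining equations $\tilde f_n$ of the strict transform, and then computes $E\cdot\tilde\Gamma$ by restricting to $E\cong\PP(1,a,b)$ and applying B\'ezout there:
\[
E\cdot\tilde\Gamma \;=\; \mathbb{V}(h_{1,k_1})\cdot_E \mathbb{V}(h_{2,k_2}) \;=\; \frac{k_1 k_2}{ab}.
\]
You instead package everything numerically via the projection formula, obtaining the clean identity $\widetilde D_1\cdot\widetilde D_2\cdot E = n_1 n_2\,E^3$ and then computing $E^3=\tfrac{1}{ab}$ from the normal bundle of $E$. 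Your route avoids writing down weighted initial forms and makes the intersection-theoretic content transparent; the paper's route is more elementary in that it never invokes the projection formula or the self-intersection $E^3$, only B\'ezout on weighted projective space.

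Both approaches share the same delicate point, which you correctly flag as the ``main obstacle'': one needs $\widetilde D_1\cap\widetilde D_2$ to have no $1$-dimensional component inside $E$, i.e.\ the weighted initial forms of $f_1,f_2$ must have no common factor on $\PP(1,a,b)$. The paper's line ``the ideal of $\tilde\Gamma$ is generated by $\tilde f_1$ and $\tilde f_2$'' is precisely this assertion, stated without further comment. As written, the formula is sensitive to the choice of generators (e.g.\ for the cuspidal cubic $\Gamma=V(z,\,y^2-x^3)\subset\AA^3$ under the ordinary blowup, the generators $(z,\,y^2-x^3)$ give $k_1k_2=2=E\cdot\tilde\Gamma$, but the generators $(z,\,z+y^2-x^3)$ give $k_1k_2=1$). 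In the paper's only application (Proposition~\ref{bigness}) the curve $\Gamma$ is the intersection of two \emph{general} members of linear systems, so the initial forms are general and the issue does not arise; your acknowledgment of this hypothesis is appropriate and, if anything, slightly more careful than the original.
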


\begin{proof}
	Again we will work in a local analytic neighbourhood and assume that $(Y,p) = (\AA^3,0)$ and $\pi$ is given by $(u:x_1:x_2:x_3) \mapsto  (ux_1,u^ax_2,u^bx_3)$.
	We may write
	\[
	f_n = \sum_{i=k_n}^{d_n}h_{n,i}(v,y_1,y_2),
	\]
	for $n=1,2$, where $h_{n,i}$ are homogeneous polynomials with respect to the grading $(1,a,b)$ and $h_{n,k_n} \neq 0$. 
	Then, pulling back under $\pi$ we get
	\[
	\pi^*(f_n)  = f_n(ux_1,u^a x_2,u^b x_3) = u^{k_n}\left( \sum_{i=k_n}^{d_n}u^{i-k_n} h_{n,i}(x_1,x_2,x_3)\right),
	\]
	which shows that $v_E(f_n) = k_n$. 
	Moreover the ideal of $\tilde{\Gamma}$ is generated by $\tilde{f_1}$ and $\tilde{f_2}$ with
	\[
	\tilde{f_n} = \sum_{i=k_n}^{d_n} u^{i-k_n} h_{n,i}(x_1,x_2,x_3), 
	\]
	 for $n=1,2$.
	 Finally, using the fact that $E \isom \PP(1,a,b)$ and that $\tilde{\Gamma}$ is given by the vanishing of the $\tilde{f_n}$, $n=1,2$, we may compute that 
	 \[
	 E\cdot \tilde{\Gamma} = \mathbb{V}(\tilde{f_1})|_E \cdot \mathbb{V}(\tilde{f_2})|_E = \mathbb{V}(h_{1,k_1}) \cdot_{E} \mathbb{V}(h_{2,k_2}) = \frac{k_1\cdot k_2}{ab}.
	 \]
\end{proof}

\section{The construction}

Throughout this section, $Y$ will denote either $\PP^3$ or a smooth cubic $3$-fold in $\PP^4$.
Denote by $\mathcal{H}_{g,d}^Y$ the Hilbert scheme of subvarieties of $Y$ with Hilbert polynomial $P(n) = dn - g +1$.

\begin{proposition}\label{generalities on the involutions}
	Consider the following pairs $(g,d)$ depending on $Y$:
	\[\arraycolsep=6pt\def\arraystretch{1.2}
	\begin{array}{|c|c|}
		\hline
		\rowcolor{gray!50}
		Y & (g,d)\\
		\hline
		\PP^3 & (2,8), (6,9), (10,10), (14,11)\\
		\hline
		\rowcolor{gray!20}
		\text{Cubic $3$-fold} & (0,5), (2,6)\\
		\hline
	\end{array}
	\]
	Then there exists an irreducible component $\mathcal{S}_{g,d}^Y$ of  $\mathcal{H}_{g,d}^Y$ whose general element $C \in \mathcal{S}_{g,d}^Y$ is a smooth curve satisfying the following:
	if $X \map Y$ is the blowup of $Y$ along $C$ then:
	\begin{enumerate}
		\item $X$ is a smooth weak-Fano $3$-fold, there are finitely many $(-K_X)$-trivial curves and $\lvert -K_X \rvert$ is base-point free;
		\item The anti-canonical model $Z \defeq \Proj\left(\oplus_{n\geq 0}H^0(X,-nK_X)\right)$ of $X$ is a sextic double solid, that is a double cover of $\PP^3$ ramified along a sextic hypersurface;
	\end{enumerate}
	(see Lemma \ref{I is uncountable} for an estimation of the dimension of $\mathcal{S}_{g,d}^Y$).
\end{proposition}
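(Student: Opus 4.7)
The plan is to verify the listed properties case-by-case for the six pairs $(g,d)$ and then identify $Z$ via the classification of Gorenstein Fano threefolds of anticanonical degree~$2$. The numerical heart of the statement is the coincidence
\[
(-K_X)^3 \;=\; A^3 - 2(A\cdot C) + 2g - 2 \;=\; 2
\]
(with $A = -K_Y$), obtained by expanding $(\pi^*A - E)^3$ and using $(\pi^*A)^2\cdot E = 0$, $\pi^*A \cdot E^2 = -A\cdot C$ and $E^3 = -\deg N_{C/Y} = -(2g-2+A\cdot C)$. Substituting $A^3 = 64$, $A\cdot C = 4d$ for $Y = \PP^3$ and $A^3 = 24$, $A\cdot C = 2d$ for the cubic, one checks that each tabulated $(g,d)$ satisfies $g = 4d - 30$, resp.\ $g = 2d - 10$, which are exactly the pairs making $(-K_X)^3 = 2$. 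This is the anticanonical degree of a sextic double solid, and it is what forces the identification of $Z$ below.

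For the existence of $\mathcal{S}_{g,d}^Y$ with smooth general member I would work from the target side: starting with a general sextic double solid $Z_0$ containing an appropriate rational or low-genus curve $\Gamma_0$, blow up $\Gamma_0$ and observe that the resulting smooth weak-Fano threefold $X_0$ admits a second extremal ray which is divisorial and contracts to a smooth curve $C_0 \subset Y$ of the correct $(g,d)$. This yields an irreducible family of smooth curves in $Y$, hence an irreducible component of $\mathcal{H}_{g,d}^Y$ whose general point is smooth. Smoothness of the Hilbert scheme at $[C_0]$ reduces to $H^1(C_0, N_{C_0/Y}) = 0$, which can be read off from the explicit $(X_0, Z_0)$ together with Kawamata--Viehweg vanishing.

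For the weak-Fano, finite-trivial-locus, and base-point-freeness conclusions, I would analyze curves $L \subset X$ with $-K_X \cdot L \leq 0$. Such an $L$ is either a curve in $E$, controlled by the scroll structure $E \to C$, or the strict transform of some $\Gamma \subset Y$, in which case $-K_X \cdot L = A\cdot \Gamma - m$, where $m$ measures the local contact of $\Gamma$ with $C$. The geometric claim is that a general $C \in \mathcal{S}_{g,d}^Y$ admits no $\Gamma$ with $m > A\cdot \Gamma$ and only finitely many with $m = A\cdot \Gamma$; concretely this amounts to bounding multisecant lines, secant conics, and low-degree rational curves along $C$ (and, for the cubic, lines and conics of $Y$ meeting $C$ excessively). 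Granted nefness and bigness ($(-K_X)^3 = 2 > 0$), Kawamata's base-point-free theorem on the terminal $X$ gives base-point freeness of $|-K_X|$, and the anticanonical morphism contracts exactly the finitely many trivial classes to a Fano threefold $Z$ with $(-K_Z)^3 = 2$. A Riemann--Roch / Kawamata--Viehweg computation yields $h^0(-K_X) = 4$, so the anticanonical map factors as $X \to Z \to \PP^3$ with $Z \to \PP^3$ a double cover; an elementary ramification computation identifies the branch divisor as a sextic.

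The main obstacle is the multisecant/contact analysis of the third step: for each of the six pairs $(g,d)$ one must verify that a generic curve of that degree and genus avoids the excessive incidences with small rational curves that would produce a $(-K_X)$-negative class. This is the real geometric content of the proposition; the numerical verification and the Fano-classification identification of $Z$ are comparatively formal consequences.
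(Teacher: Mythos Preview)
Your numerical computation of $(-K_X)^3 = 2$ and the eventual identification of $Z$ with a sextic double solid are in line with the paper. The paper, however, delegates existence of $\mathcal{S}_{g,d}^Y$ and property~(1) entirely to \cite{WeakFanos} and \cite{WeakFanosCubics}; its only self-contained work is the proof of~(2), which it carries out by a graded-ring argument: using Riemann--Roch plus Kawamata--Viehweg to compute $h^0(X,-nK_X)$ for all $n$, it shows that the anticanonical ring is generated by four elements $x_0,\dots,x_3$ in degree~$1$ and one element $t$ in degree~$3$, with a single relation $t^2 = f_6(x_0,\dots,x_3)$ in degree~$6$. Your sketch of~(2) (``$h^0(-K_X)=4$, hence a $2:1$ map to $\PP^3$, hence $Z$ is a double cover with sextic branch'') is in the same spirit but skips the step that requires care, namely that the anticanonical \emph{model} really is this double cover and not some further modification; the paper pins this down via the explicit ring computation and a connected-fibres argument.

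Your proposed existence argument has a genuine gap. In the geometry of the paper the morphism $X\to Z$ is \emph{small}: it contracts finitely many curves, and $Z$ is a singular, non-$\QQ$-factorial sextic double solid. So $X$ is not obtained from $Z$ by blowing up a curve $\Gamma_0$; it is a small ($\QQ$-factorialisation) resolution. Starting from a \emph{general} (hence smooth, $\rho=1$) sextic double solid and blowing up a curve on it produces a different two-ray game altogether. Even if you instead start from a suitably singular $Z_0$ and take a small resolution, you must still show that the divisorial contraction on the other ray lands on the prescribed $Y$: this is automatic for $\PP^3$, but for a fixed smooth cubic threefold it is not---you would only get \emph{some} cubic threefold, and you would then need a deformation argument to hit the given one. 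The cited references avoid this by working directly on $Y$: they construct curves $C\subset Y$ explicitly (for the cubic case via a singular hyperquadric section, cf.\ the proof of Lemma~\ref{no autos on cubic}) and then perform exactly the multisecant/contact analysis you flag in your last paragraph.

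One smaller point: Kawamata's base-point-free theorem gives that $\lvert -mK_X\rvert$ is free for $m\gg 0$, not that $\lvert -K_X\rvert$ itself is free. The paper again takes this from the cited references, where it is proven by exhibiting a smooth $K3$ member of $\lvert -K_X\rvert$.
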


\begin{proof}
	For the non-emptiness of $\mathcal{S}_{g,d}^Y$ we refer to \cite[Section 5.1]{WeakFanos} and \cite[Section 3.3]{WeakFanosCubics} for the cases of $\PP^3$ and a smooth cubic $3$-fold respectively.
	
	Similarly, the proof of $(1)$ can be found in \cite[Proposition 5.11]{WeakFanos} and \cite[Proposition 3.7]{WeakFanosCubics} for the two cases respectively.
	
	As for $(2)$,	
	we first note that in all cases, using the formula
	\[
	(-K_X)^3 = (-K_Y)^3 + 2K_Y\cdot C + 2g - 2
	\]
	we get $(-K_X)^3 = 2$.
	By the Hirzebruch-Riemann-Roch theorem (see \cite[pg. 437, Ex. 6.7]{Hartshorne}) together with the Kawamata-Viehweg vanishing theorem we get
	\[
	h^0(X,-nK_X) = \frac{n(n+1)(2n+1)}{12}(-K_X^3) + 2n+1 = \frac{n(n+1)(2n+1)}{6}+ 2n+1.
	\]	
	For $n = 1$ we get $h^0(X,-K_X) = 4$; we write $x_0, x_1, x_2, x_3$ for the generators. 	
	By $(1)$ the linear system $|-K_X|$ is base-point free and the associated morphism $X \to \PP(H^0(X,-K_X))$ contracts finitely many curves and is thus dominant. In particular, the $x_i$'s satisfy no polynomial relation. Moreover, since a general  element of $|-K_X|$ is the pullback of a general hyperplane and $(-K_X)^3=2$, the projection formula (see \cite[1.9]{Debarre}) implies that $X \to \PP(H^0(X,-K_X))$ is generically $2$ to $1$.
	For $n = 2$ we get $h^0(X,-2K_X) = 10 = \dim S^2H^0(X,-K_X)$. Since there is no relation between the $x_i$'s, we get the equality of these two spaces.
	For $n = 3$ we get $h^0(X,-3K_X) = 15 = \dim S^3H^0(X,-K_X) + 1$. Again using the fact that there is no relation between the $x_i$'s, we get that we only have one new generator. That is
	\[
	H^0(X,-3K_X) = S^3H^0(X,-K_X) \oplus  \langle t \rangle.
	\]
	We now consider the diagram
	\[
	\xymatrix@R=.4cm{
	X  \ar[rr] \ar[rd] && X' \ar[ld]\\
			& \PP^3
	}
	\]
	with $X' = \Proj(R)$, where $R$ is the graded algebra generated by $x_0,\dots,x_3$ with degrees $1$ and $t$ with degree $3$ and $X' \to \PP^3$ is the projection to the first four factors. Note that $X \to X'$ and $X \to \PP^3$ both contract the $(-K_X)$-trivial curves. Moreover, if $X' \to \PP^3$ were generically one to one, it would be a bijection and thus an isomorphism from Zariski's Main Theorem. Thus $X' \to \PP^3$ is two to one, which implies that $X \to X'$ has connected fibers. 
	For $n\geq 4$, the morphism given by $|-nK_X|$ contracts the same curves as $X \to X'$ and has connected fibers. Thus by \cite[Proposition 1.14]{Debarre}, these two morphisms are the same up to isomorphism. In particular, there is no new generator for any $n \geq 4$. Finally, since we know that the algebra $\oplus_{n\geq 0}H^0(X,-nK_X)$ is generated by $x_0,\dots,x_3,t$, 
	we only have to calculate the dimensions of the graded components to see that we have only one relation in degree $6$:
	indeed, for $n = 4,5$ we have
	\[
	h^0(X,-nK_X) = \dim \CC[x_0,\dots,x_3,t]_n;
	\]
	however, for $n = 6$ we have
	\[
	h^0(X,-6K_X) = 104 = \dim \CC[x_0,\dots,x_3,t]_n - 1,
	\]
	which shows that there is a relation in degree $6$ which, up to change of coordinates, can be brought to the form $F(x_0,\dots,x_3,t) = t^2 - f_6(x_0,\dots,x_3) = 0$;
	for any $n \geq 7$ we have
	\[
	h^0(X,-nK_X) = \dim \left(\frac{\CC[x_0,\dots,x_3,t]}{(F(x_0,\dots,x_3,t))}\right)_n.
	\]

\end{proof}

\begin{remark}\label{birational self-map}
	The construction above induces a birational self-map of $Y$ in the following way: denote by $\eta$ the rational map $Y \rmap X \map Z$ and by $p$ the deck transformation of $Z$ over $\PP^3$. Then $\chi_C \defeq \eta^{-1}\circ p \circ \eta \colon Y \rmap Y$ defines a birational map. Note that $\chi_C$ is an involution. Schematically, we have the diagram
	\[
	\xymatrix@R=.3cm{
	X \ar[dd] \ar[rd] \ar@{..>}[rr] && X \ar[ld] \ar[dd]\\
						& Z \ar@(dl,dr)_{p}\\
	Y && Y
	}
	\]
\end{remark}

\begin{remark}
	In the setting of Proposition \ref{generalities on the involutions}, any curve $\gamma$ contracted by $X \to Z$ is smooth and rational with normal bundle isomorphic to $\OO_{\PP^1}(a)\oplus  \OO_{\PP^1}(b)$, with $(a,b) = (-1,-1)$ or $(0,-2)$.
	
	Indeed, consider the contraction
	\[
	\gamma \subset S \subset X \map p \in H \subset Z
	\]
	where $p$ is a singular point of $Z$, $H$ is a general hyperplane section though $p$ and $S$ is the strict transform of $H$.
	Since $H$ is general, we may assume that its only singularity is $p$.
	By Theorem \ref{generalities on the involutions} $S$ is smooth and thus the morphism $S \to H$ factors through the minimal resolution $E \subset T \map p \in H$ of $H$, where $E$ is a chain of smooth rational curves.
	However, since the relative Picard rank of $S \to H$ is $1$, the morphism $S \to T$ must be an isomorphism.
	In particular, $\gamma$ is a smooth rational curve.

	Now denote by $E\subset W \to \gamma \subset X$ the blowup of $X$ along $\gamma$.
	Then $E$ is isomorphic to the Hirzebruch surface $\FF_{a - b}$. 
	Then by \cite[Lemma 2.2.14]{AGV} we have
	\[
	a+b = \deg\left(N_{\gamma/X}\right) = (-K_X)\cdot \gamma + 2g(\gamma) - 2 = -2.
	\]
	Moreover, using adjunction formula as well as the formulas in \cite[Lemma 5.5]{zikas2020sarkisov} we may compute
	\[
	-K_W|_E = -K_E + E|_E = -\frac{1}{2}K_E.
	\]
	Since $-K_W$ is nef (see case 3 in the proof of Proposition \ref{bigness}) so is $K_E$.
	Thus $E \isom \FF_n$ with $n = 0, 1$ or $2$, that is $a - b = 0, 1$ or $2$.
	The only integer solutions to the two equations are $(a, b) = (-1,-1)$ and $(0,-2)$. 
\end{remark}

\begin{proposition}\label{degree}
	Let $Y$, $C$ and $X$ be as above and let $H$ denote a hyperplane if $Y$ is $\PP^3$ and a hyperplane section otherwise. Then the degree of $\chi_C$ with respect to $H$ is 
	\[
	\deg(\chi_C) = (r^2H^3-d)r -1,
	\]
	where $d$ is the degree of $C$ and $r$ is the index of $Y$.
\end{proposition}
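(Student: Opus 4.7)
The plan is to determine the action of the birational involution $\tau = \chi_C|_X : X \rmap X$ on $\Pic(X)$, and then read off $\deg(\chi_C)$ via a pushforward. Since the anticanonical morphism $\eta: X \to Z$ is small (it contracts only the curves described in the preceding Remark), it is an isomorphism in codimension $1$; the automorphism $p: Z \to Z$ lifts through $\eta$ to $\tau = \eta^{-1}\circ p \circ \eta$, which is therefore also an isomorphism in codimension $1$. Consequently $\tau^*$ is a well-defined involution of $\Pic(X) = \ZZ\langle \pi^*H, E\rangle$ that preserves triple intersection numbers of divisor classes (generic representatives can be chosen to avoid the codimension-$2$ non-isomorphism locus). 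Writing $\tau^*\pi^*H = a\pi^*H + bE$, the factorization $\chi_C \circ \pi = \pi \circ \tau$ gives $\chi_C^*H = \pi_*(\tau^*\pi^*H) = aH$ in $\Pic(Y)$ (since $\pi_*E = 0$), so $\deg(\chi_C) = a$.

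Next I would extract two relations between $a$ and $b$. Since $-K_X = \eta^*(-K_Z)$ and $-K_Z$ is the pullback of a hyperplane from $\PP^3$ (hence $p$-invariant), we have $\tau^*(-K_X) = -K_X$. Using the identity $-K_X = r\pi^*H - E$ this forces
\[
\tau^*E = r(a-1)\pi^*H + (rb+1)E,
\]
and imposing $(\tau^*)^2 = \id$ on $\pi^*H$ yields $b(a + rb + 1) = 0$. Discarding the trivial case $b=0$ (which forces $\tau^* = \id$), one obtains
\[
a + rb + 1 = 0. \tag{I}
\]

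The second relation comes from intersection-number preservation applied to $\pi^*H\cdot(-K_X)^2$. A direct expansion using $(\pi^*H)^2\cdot E = 0$ and $\pi^*H\cdot E^2 = -d$ gives $\pi^*H\cdot(-K_X)^2 = r^2H^3 - d$, while the decomposition $(-K_X)^3 = r\,\pi^*H\cdot(-K_X)^2 - E\cdot(-K_X)^2$ combined with $(-K_X)^3 = 2$ (Proposition \ref{generalities on the involutions}) yields $E\cdot(-K_X)^2 = r(r^2H^3 - d) - 2$. Substituting $\tau^*\pi^*H = a\pi^*H + bE$ into the equality $\tau^*\pi^*H\cdot(-K_X)^2 = \pi^*H\cdot(-K_X)^2$ produces
\[
(a + rb - 1)(r^2H^3 - d) = 2b. \tag{II}
\]

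Combining (I) and (II) gives $b = -(r^2H^3 - d)$ and hence $a = -rb - 1 = r(r^2H^3 - d) - 1$. The main obstacle is the initial step, namely verifying that $\tau^*$ really preserves triple intersection numbers of divisors; this rests on $\eta$ being small (established in Proposition \ref{generalities on the involutions}), which forces $\tau$ to be an isomorphism in codimension $1$. Once this is in place, the rest of the argument is elementary linear algebra in $\Pic(X)$ together with the standard intersection formulas on the blowup of a smooth curve in a smooth $3$-fold.
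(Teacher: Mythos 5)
Your argument is essentially the paper's proof in a different basis: the paper also determines the action of the flop $\phi$ on $N^1(X)$ from the three inputs ($K_X$-invariance, $\phi^2=\id$, and invariance of one intersection number involving $K_X^2$), writing everything in the basis $(K_X,H)$ instead of $(\pi^*H,E)$; your coefficients $a=r(r^2H^3-d)-1$, $b=-(r^2H^3-d)$ translate exactly into the paper's $\phi^*H=-(r^2H^3-d)K_X-H$, and the final pushforward step is identical. The computations in (I) and (II) check out.

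One caveat: the principle you invoke to get (II) --- that an isomorphism in codimension $1$ ``preserves triple intersection numbers of divisor classes'' --- is false in general, and the parenthetical about generic representatives does not rescue it. For a flop of a single $(-1,-1)$-curve $\gamma$ one has $(\tau^*D)^3=D^3-(D\cdot\gamma)^3$, and indeed in your own notation $(\tau^*\pi^*H)^3=(a\pi^*H+bE)^3\neq (\pi^*H)^3$ for the curves at hand (e.g.\ for $(g,d)=(14,11)$ the left side is $-66$ while $H^3=1$). What saves your equation (II) is that two of the three factors are $-K_X=\eta^*(-K_Z)$: on a common resolution $p\colon W\to X$, $q\colon W\to X$ of $\tau$ one has $p^*K_X=q^*K_X$ because the flop is crepant, and the correction terms in $\tau^*D_1\cdot\tau^*D_2\cdot\tau^*D_3-D_1\cdot D_2\cdot D_3$ all involve intersections of some $D_i$ with the flopped ($K_X$-trivial) curves, so they vanish as soon as one $D_i$ is proportional to $K_X$. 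So the instance you use is valid, but the statement should be restricted to products containing a $K_X$-factor (the paper silently uses exactly this restricted invariance for $(\phi^*K_X)^2\cdot\phi^*H$). Finally, your dismissal of $b=0$ is at the same level of rigour as the paper's assertion that $\phi$ is a flop rather than an isomorphism over $Z$; both ultimately rest on the cited constructions realizing $\chi_C$ as a genuine Type II link, so no objection there.
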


\begin{proof}
	We consider the induced diagram
	\[
	\xymatrix@R=.25cm{
	X \ar@{..>}[rr]^{\phi} \ar[dd] \ar[rd] && X\ar[dd] \ar[ld]\\
	& Z\\
	Y && Y
	}
	\]
	where $\phi$ is a flop over $Z$. Fix the basis $(K_X,H)$ for the $\QQ$-vector space $N^1(X)$, where, by abuse of notation, we denote again by $H$ the class of the pullback $H$. Then $\phi$ induces an automorphism of $N^1(X)$, by pullback, and since $K_X$ is an eigenvector for it, the associated matrix has the form 
	\[\phi^* = 
	\begin{pmatrix}
		1 & a\\
		0 & b
	\end{pmatrix}.
	\]
	Since $\phi^2 = id_X$, $b=-1$. Thus $\phi^*H = aK_X - H$.
	
	Using the formulas in \cite[Lemma 2.2.4]{AGV}, we may compute that
	\[
	(K_X)^2 \cdot H = r^2 H^3 - d,
	\]
	where $r$ is the index of $Y$ and $d$ the degree of $C$, and
	\[
	(\phi^*K_X)^2\cdot \phi^*H = (K_X)^2 \cdot (aK_X - H) = a(K_X)^3 - K_X\cdot H = -2a - (r^2H^3 -d).
	\]
	Equating the above formulas we get $a = -(r^2H^3 -d)$. Thus 
	\[
	\phi^*H = -(r^2H^3 -d)K_X - H = ((r^2H^3 -d)r- 1)H - (r^2 -d)E
	\]
	from which we conclude that ${\chi_C}^*(H) =((r^2H^3 -d)r - 1)H$.
\end{proof}

For the pairs of genus and degree of Proposition \ref{generalities on the involutions}, we obtain the following values for the degree of $\chi_C$:
\[\arraycolsep=6pt\def\arraystretch{1.2}
\begin{array}{|c|c|c|c|c|c|}
	\hline  
	 \cellcolor{gray!50}& \cellcolor{gray!20} (g,d) 				& \cellcolor{gray!20}(2,8) & \cellcolor{gray!20} (6,9) & \cellcolor{gray!20} (10,10) & \cellcolor{gray!20} (14,11)\\
	\cline{2-6}	
	\multirow{-2}{*}{\cellcolor{gray!50}$\PP^3$}& \deg(\chi_C)   & 31     &27      &23           & 19\\
	\hline
	\cellcolor{gray!50}&  \cellcolor{gray!20}(g,d) 				& \multicolumn{2}{c|}{\cellcolor{gray!20}$(0,5)$} & \multicolumn{2}{c|}{\cellcolor{gray!20}$(2,6)$} \\
	\cline{2-6}	
	\multirow{-2}{*}{\cellcolor{gray!50} \shortstack{Cubic \\$3$-fold}} & \deg(\chi_C)   & \multicolumn{2}{c|}{$13$}      &\multicolumn{2}{c|}{$11$}\\
	\hline
\end{array}
\]

\begin{proposition}\label{bigness}
	Let $X$ be as in Proposition \ref{generalities on the involutions} and $\pi\colon (W,E) \map (X,z)$ be a divisorial contraction with $W$ $\QQ$-factorial and terminal. Then $-K_W$ is not big.
\end{proposition}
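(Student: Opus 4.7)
My plan is to compute $(-K_W)^3$ case by case based on the nature of the center $z$, and then upgrade the self-intersection bound to non-bigness via a pushforward estimate through the anticanonical model.

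Since $X$ is smooth and $W$ is $\QQ$-factorial terminal, Kawakita's classification of terminal divisorial contractions gives two cases. If $z$ is a point, $\pi$ is a $(1,p,q)$-weighted blowup with $\gcd(p,q) = 1$. Lemma \ref{Discrepancy of (1,a,b)-blowup} gives discrepancy $a_E = p+q$, and Lemma \ref{intersection with exceptional} applied to two general smooth curves through $z$ (equivalently, a direct intersection computation on $E \cong \PP(1,p,q)$) yields $E^3 = 1/(pq)$. Since $E$ is contracted to a point, the projection formula makes $\pi^\ast(-K_X)^j \cdot E^{3-j} = 0$ for $j = 1, 2$, hence
\[
(-K_W)^3 = (-K_X)^3 - (p+q)^3 E^3 = 2 - \frac{(p+q)^3}{pq} \leq -6 < 0
\]
by the AM-GM inequality $(p+q)^3 \geq 8pq$. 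If $z$ is a smooth curve $\Gamma$, then $\pi$ is the ordinary blowup with discrepancy $1$, and the formula used in the proof of Proposition \ref{generalities on the involutions} gives $(-K_W)^3 = 2g(\Gamma) - 2(-K_X\cdot \Gamma)$. Using $-K_X = \phi^\ast(-K_Z)$ with $-K_Z$ ample on the sextic double solid $Z$, I would invoke a Castelnuovo-type genus bound for curves on $Z$ to conclude $g(\Gamma) \leq -K_X \cdot \Gamma$, giving $(-K_W)^3 \leq 0$.

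To pass from $(-K_W)^3 \leq 0$ to non-bigness, I would use the birational morphism $\mu = \phi \circ \pi\colon W \to Z$, for which $-K_W = \mu^\ast(-K_Z) - a_E E$ with $E$ $\mu$-exceptional. Pushing forward,
\[
H^0(W, -nK_W) \cong H^0\bigl(Z,\; \OO_Z(-nK_Z) \otimes \mu_\ast\OO_W(-na_E E)\bigr),
\]
so sections of $-nK_W$ correspond to sections of $-nK_Z$ vanishing to order at least $na_E$ in the divisorial valuation of $E$. In the point case, the dimension of the local quotient $\OO_{Z,\mu(E)}/\{v_E \geq na_E\}$ grows like $(p+q)^3 n^3 / (6pq)$, while $h^0(Z,-nK_Z) \sim n^3/3$. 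For $n$ large enough (by Castelnuovo--Mumford regularity on the Fano variety $Z$), the restriction map is surjective, so the codimension matches this bound and
\[
h^0(W,-nK_W) = \frac{n^3}{6}\left(2 - \frac{(p+q)^3}{pq}\right) + O(n^2) = O(n^2),
\]
forcing $\operatorname{vol}(-K_W) = 0$. An analogous estimate handles the curve case, with vanishing imposed along a curve in $Z$ rather than a point.

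The main obstacle will be the curve case: establishing the genus--degree inequality uniformly for curves $\Gamma \subset X$, and carrying out the analogous asymptotic codimension estimate when $\mu(E)$ is a one-dimensional subscheme of $Z$. Both points probably require a careful use of the degree-two cover $Z \to \PP^3$ together with classical bounds on curves in $\PP^3$, and some attention must be paid to the case where $\Gamma$ meets the $(-K_X)$-trivial locus so that $\phi\vert_\Gamma$ fails to be an isomorphism.
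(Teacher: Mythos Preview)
Your strategy of bounding $(-K_W)^3$ and then upgrading to a volume estimate has two genuine gaps.

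\textbf{Curve case.} The inequality $g(\Gamma)\le(-K_X)\cdot\Gamma$ that you need is simply false. Take $Y=\PP^3$ and let $\Gamma\subset X$ be the strict transform of a smooth plane curve of degree $11$ lying in a general plane $P$ and avoiding the finitely many points $P\cap C$. Then $g(\Gamma)=\binom{10}{2}=45$ while $(-K_X)\cdot\Gamma=4\cdot 11=44$, so your formula gives $(-K_W)^3=2g-2(-K_X)\cdot\Gamma=2>0$. No Castelnuovo-type bound on $Z$ can save this, because the classical genus bound for degree-$d$ curves is quadratic in $d$, not linear. The paper does \emph{not} try to control $(-K_W)^3$ here. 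Instead it splits into two subcases: if $\Gamma$ is not contracted by $\phi\colon X\to Z$, one observes that $H^0(W,-nK_W)$ consists of degree-$n$ elements of $I^n$, where $I$ is the ideal of $\phi(\Gamma)$ in the graded ring of $Z$; since a degree-$n$ element of $I^n$ is a polynomial in the linear part of $I$, and $\dim I_1\le 3$ (four independent linear forms would force $\phi(\Gamma)$ to be a point), one gets $h^0(W,-nK_W)=O(n^2)$. If $\Gamma$ is a flopping curve, the paper shows directly that $|{-}nK_W|$ factors through the projection $\PP^3\dashrightarrow\PP^2$ from the image point, so the anticanonical image is a surface.

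\textbf{Point case.} Your computation $(-K_W)^3=2-(p+q)^3/(pq)<0$ is correct, but the ``surjectivity of the restriction map'' step is self-defeating: you are asking the map
\[
H^0(Z,-nK_Z)\longrightarrow \OO_{Z,\mu(E)}\big/\{v_E\ge n(p+q)\}
\]
to be surjective, yet you have just argued that for large $n$ the target has dimension $\sim n^3(p+q)^3/(6pq)$ which \emph{exceeds} $h^0(Z,-nK_Z)\sim n^3/3$. So the map cannot be surjective, the displayed equality $h^0(W,-nK_W)=\tfrac{n^3}{6}(2-(p+q)^3/(pq))+O(n^2)$ is meaningless (the right-hand side is eventually negative), and you obtain no upper bound on $h^0(W,-nK_W)$ at all. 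The paper's argument here is entirely different: assuming $-K_W$ big, one produces a covering family of curves $\Gamma_W$ obtained as strict transforms of $\Gamma=S_X\cap H_X$ with $S_X\in|{-}nK_X|$ general and $H_X\in|{-}K_X|$ passing through $z$; a computation with Lemma~\ref{intersection with exceptional} gives $(-K_W)\cdot\Gamma_W=n\bigl(2-\tfrac{(a+b)^2}{ab}v_E(H_X)\bigr)<0$, contradicting bigness via the decomposition $-K_W\equiv A+E'$ with $A$ ample.
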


\begin{proof}
	We first note that since $W$ is terminal and $X$ is smooth, by \cite[Proposition 1.2]{ContractionsToCurves} and \cite[Theorem 1.2]{Kawakita}, $W \map X$ is either the regular blowup of a curve or a $(1,a,b)$-blowup of a point, with $a,b$ coprime.
	
	We distinguish 3 cases based on the geometry of the center $z$: \\
	\textbf{Case 1:} $z$ is a point and $W \map X$ is a $(1,a,b)$-blowup.
	
	Suppose for contradiction that $-K_W$ is big and let $S_W \in \lvert -nK_W \rvert$ be a general element, for $n \gg 1$. Denote by $S_X$ the image of $S_W$ in $X$, by $H_X$ the pullback of a general hyperplane section $H_Z$ of $Z$ containing the image of $z$ and $\Gamma \subset X$ the intersection of $S_X$ with $H_X$. First notice that $S_X \in  \lvert -nK_X \rvert$ and $H_X \in  \lvert -K_X \rvert$. We thus have
	\[
	(-K_X)\cdot \Gamma = n(-K_X)^3 = 2n. 
	\]
	If we denote by $\Gamma_W$ the strict transform of $\Gamma$ in $W$, then by Lemma \ref{intersection with exceptional} we have 
	\[
	E\cdot \Gamma_W = \frac{v_E(S_X)\cdot v_E(H_X)}{ab} = \frac{n(a+b)}{ab}v_E(H_X),
	\]
	where the second equality follows from the ramification formula of Lemma \ref{Discrepancy of (1,a,b)-blowup}. Again, using the same formula we may compute that
	\[
	(-K_W)\cdot \Gamma_W = n\left(2 - \frac{(a+b)^2}{ab}v_E(H_X)  \right).
	\]
	Since we chose $H_X$ to be the pullback of a hyperplane containing the image of $z$, $v_E(H_X) \geq 1$. 
	The quantity $\frac{(a+b)^2}{ab}$ is always strictly greater than $2$, and so $(-K_W)\cdot \Gamma_W<0$.
	Finally, since we assumed that $-K_W$ is big then the sections of $-nK_W$ cover $W$ for sufficiently large $n$ and so do the curves $\Gamma$ chosen as above. This gives us a dense subset of $W$ covered by $(-K_W)$-negative curves, which contradicts the bigness of $-K_W$.
	\\	
	\textbf{Case 2:} $z$ is a curve not contracted by $X \map Z$.
	
	We have 
	\[
	-nK_W = \pi^*(-nK_X) - nE.
	\]
	Sections of $-nK_W$ are pullbacks of degree $n$ hypersurface sections of $Z$ vanishing along the curve $C \defeq \pi(z)$ with multiplicity $n$. Let ${h=0}$ be such a hypersurface section and $I = (f_1,\dots,f_k)$ be the ideal of $C$. Then $h \in I^n$ and since $\deg(h) = n$, $h$ can only be a linear combination of degree $n$ monomials in the linear elements in $I$. Thus for $-nK_W$ to be big, we need to have at least $4$ linear elements in $I$ which is a contradiction.\\	
	\textbf{Case 3:} $z$ is a curve contracted by $X \map Z$.
	
	In this case we consider the diagram 
	\[
	\xymatrix{
		W \ar[d]_g \\
		X \ar[rd]_f & F\ar[d]^r\\
		& \PP^3
	}
	\]
	where $f\colon X \to \PP^3$ is the morphism given by $|-K_X|$ and $r\colon F \to \PP^3$ is the blowup of the image $p \in \PP^3$ of $\gamma$ under $f$.
	Since the preimage of $p$ under $f \circ g$ is a Cartier divisor, $f\circ g$ factors through $r$ via $s\colon W \to F$.
	Finally, sections of $-K_W$ are pullbacks of hyperplanes of $\PP^3$ through $p$.
	Thus the previous diagram completes to the following:
	\[
	\xymatrix{
		W \ar[d]_g \ar[rd]^s \ar@/^{0.7cm}/[rrdd]^{\lvert -K_W \rvert}\\
		X \ar[rd]_f & F\ar[d]^r \ar[rd]\\
		& \PP^3 \ar@{-->}[r]& \PP^2,
	}
	\]
	where $\PP^3 \rmap \PP^2$ denotes the projection from the point $p$.
	Since $W \to \PP^2$ has connected fibres, it coincides with its Stein factorization.
	Thus, for any $n \geq 1$, the image of $W$ under the morphism given by $|-nK_W|$ is isomorphic to $\PP^2$, showing that $|-K_W|$ is not big.
	
\end{proof}

\begin{corollary}\label{relations}
	Using the notations of Proposition \ref{generalities on the involutions} and Remark \ref{birational self-map}, there exists no rank 3 fibration dominating the rank 2 fibration $X \map Y \map \Spec(\CC)$. Consequently, there are no non-trivial relations in $\BirMori(Y)$  involving  $\chi_C$.
\end{corollary}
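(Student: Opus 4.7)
The plan is to argue by contradiction using Proposition \ref{bigness}. Suppose a rank $3$ fibration $W/B$ dominates the rank $2$ fibration $X/\Spec(\CC)$ associated to $\chi_C$. Dominance forces $B = \Spec(\CC)$ and provides a birational contraction $f\colon W \dashrightarrow X$. Since $X \to Y$ is a divisorial contraction and $\rho(Y) = 1$, we have $\rho(X) = 2$, so $\rho(W) = 3$ and $f$ contracts exactly one divisor.

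Next, using the MMP machinery available for rank $r$ fibrations (condition $(3)$ of Definition \ref{rank r fibration}), I would run an appropriate MMP on $W$ over $\Spec(\CC)$ whose output is $X$. Since $\rho$ drops by exactly one, this MMP consists of a finite sequence of flips together with a single divisorial contraction; a careful choice of the scaling divisor (for instance, an ample class pulled back from $X$ through a common resolution) places the divisorial contraction as the terminal step, producing a $\QQ$-factorial terminal threefold $W'$, a pseudo-isomorphism $W \dashrightarrow W'$, and a divisorial contraction $W' \to X$. Proposition \ref{bigness} then applies to $W' \to X$ and yields that $-K_{W'}$ is not big; since pseudo-isomorphisms are isomorphisms in codimension one and hence preserve the dimensions of the pluri-anticanonical sections, $-K_W$ is not big either. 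This contradicts condition $(5)$ of Definition \ref{rank r fibration}.

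For the second assertion, Theorem \ref{elementary relations are generators}$(2)$ says that every relation in $\BirMori(Y)$ is generated by trivial and elementary relations, while Proposition \ref{rank 3 fibrations correspond to elementrary relations} identifies each elementary relation with a rank $3$ fibration. The previous paragraph rules out any rank $3$ fibration dominating $X/\Spec(\CC)$ (and, by Remark \ref{equivalent links}, the rank $2$ fibration of any link equivalent to $\chi_C$), so $\chi_C$ appears in no elementary relation. Consequently any relation in $\BirMori(Y)$ involving $\chi_C$ is built solely from trivial relations and is itself trivial.

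The principal technical obstacle I anticipate lies in the factoring step: arranging the MMP so that the divisorial contraction targets $X$ itself, and not merely some model pseudo-isomorphic to $X$. This requires either a careful choice of boundary divisor or a direct analysis of the Mori cone of $W/\Spec(\CC)$, analogous to the description of rank $2$ fibrations in Remark \ref{rank 1 and 2 fibrations}. If this is inconvenient, an alternative is to observe that the argument of Proposition \ref{bigness} adapts verbatim to divisorial contractions whose target is only pseudo-isomorphic to $X$, since the pullback of $-K_X$ and of the hyperplane sections of $Z$ used in the proof is insensitive to modifications in codimension two.
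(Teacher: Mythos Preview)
Your proposal is correct and follows essentially the same route as the paper. For the technical obstacle you flag, the paper does exactly what you suggest first: it pulls back an ample divisor $A$ from $X$ and invokes the Mori Dream Space property (condition $(1)$ of Definition~\ref{rank r fibration}) to run log-flips until the transform of $f^*A$ becomes nef, hence semi-ample, so the associated contraction is a genuine morphism onto $X$ and Proposition~\ref{bigness} applies directly without any adaptation.
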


\begin{proof}
	Let $W' \map B$ be a rank $3$ fibration dominating $X \map \Spec(\CC)$. Then $B = \Spec(\CC)$ and we have a diagram of the form
	\[
	\xymatrix@C=0.3cm@R=0.3cm{
	W' \ar@{-->}[rrd]_f \ar[rrrr] &&&& \Spec(\CC)\\
		&& X \ar[urr]
	}
	\]
	By the definition of a rank $3$ fibration $W'$ is a Mori Dream Space. Let $a$ be an ample divisor on $X$. Then there exists a composition of log-flips $g\colon W \psmap W'$ so that $g_*f^*(A)$ is nef on $W'$. With $W'$ being a Mori Dream Space itself, $g_*f^*(A)$ is semi-ample and the associated contraction gives rise to the diagram
	\[
	\xymatrix@R=0.5cm@C=0.5cm{
	W' \ar@{-->}[rd]_f \ar@{..>}[r]^g& W \ar[d]\\
			& X.
	}
	\]
	By property $(3)$ of Definition \ref{rank r fibration} $W$ is also terminal. Thus by Proposition \ref{bigness}, $-K_W$ is not big. However this would also imply that $-K_{W'}$ is not big which contradicts property $(1)$ of Definition \ref{rank r fibration}.
	The second claim follows directly from Theorem \ref{elementary relations are generators}. 
\end{proof}

\begin{remark}\label{trivial relations}
	The trivial relations involving $\chi_C$ are:
	\begin{gather*}
		(\chi_C)^2 = \id \quad \text{ and } \quad a \circ \chi_C \circ b \circ \psi^{-1}  = \id
	\end{gather*}
	where $a,b^{-1}$ are any Mori fiber space isomorphisms starting from $Y$ and $\psi$ is the Sarkisov link given by the composition $a \circ \chi_C \circ b$.
	Moreover, in the second type of relation, if $a, b \in \Aut(Y)$ with $b = a^{-1}$, then $a\circ \chi_C \circ a^{-1} = \chi_{a(C)}$.
\end{remark}

\section{Consequences}

In what follows we will stick to the notation introduced in section $3$: 
$Y$ will denote either $\PP^3$ or a smooth cubic $3$-fold in $\PP^4$ and $\mathcal{S}_{g,d}^Y$ will denote the irreducible component of the  Hilbert scheme of subvarieties of $Y$ with Hilbert polynomial $P(n) = dn - g +1$, defined in Proposition \ref{generalities on the involutions}.

\subsection{Homomorphism and semi-direct product structure.}\label{section:homoAndSDProd}

We now construct a group homomorphism from $\Bir(Y )$ to a free product $\freeprod_I \ZZ/2\ZZ$, where the indexing set I is uncountable. To do so, we will first construct a groupoid homomorphism from $\BirMori(Y )$ to the same target and then restrict it to $\Bir(Y )$.

Let $(g,d)$ be one of the pairs of Proposition \ref{generalities on the involutions}. We define the set $I_{g,d}$ to be the set of elements $\mathcal{S}^Y_{g,d}$ up to automorphisms of $Y$ and $I$ to be the disjoint union of all $I_{g,d}$ for all pairs $(g,d)$ considered in Proposition \ref{generalities on the involutions}.
The following lemma shows that $I_{g,d}$ and thus $I$ is uncountable.

\begin{lemma}\label{I is uncountable}
	For all pairs $(g,d)$ and $C \in \mathcal{S}^Y_{g,d}$ satisfying the generality conditions of Proposition \ref{generalities on the involutions}, 
	\[
	 -K_Y\cdot C \leq \dim \mathcal{S}^Y_{g,d} \leq -K_Y\cdot C  +1.
	\]
	In particular, $\dim \left(\mathcal{S}^Y_{g,d}\right) > \dim \left(\Aut(Y)\right)$. 
\end{lemma}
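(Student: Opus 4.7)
The plan is to prove the bounds via standard deformation theory of the Hilbert scheme, with the main content being a normal-bundle computation and the control of its first cohomology, then to conclude by plugging in the explicit numerics from the table.

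First I would compute $\chi(N_{C/Y})$. From the short exact sequence $0\to T_C\to T_Y\vert_C\to N_{C/Y}\to 0$, taking determinants gives
\[
\det N_{C/Y}\;\cong\;\omega_C\otimes(-K_Y)\vert_C,
\]
so $\deg N_{C/Y}=-K_Y\cdot C+2g-2$. Applying Riemann--Roch to the rank $2$ bundle $N_{C/Y}$ on the smooth curve $C$ yields
\[
\chi(N_{C/Y})\;=\;\deg N_{C/Y}+2(1-g)\;=\;-K_Y\cdot C.
\]

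Next, the lower bound is the standard lower bound from deformation theory of embedded curves: the Zariski tangent space to $\mathcal{H}_{g,d}^Y$ at $[C]$ is $H^0(N_{C/Y})$ and the obstruction space is $H^1(N_{C/Y})$, hence any irreducible component through $[C]$ has dimension at least $h^0(N_{C/Y})-h^1(N_{C/Y})=\chi(N_{C/Y})=-K_Y\cdot C$. This directly gives the inequality on the left.

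For the upper bound, I would show $h^1(C,N_{C/Y})\leq 1$, which together with Riemann--Roch gives $h^0(N_{C/Y})\leq -K_Y\cdot C+1$ and bounds the dimension of $\mathcal{S}^Y_{g,d}$ from above (via the tangent space at a smooth general point). This step is the main obstacle, and I would handle it case-by-case using the explicit descriptions of the curves in \cite{WeakFanos} and \cite{WeakFanosCubics}: each general $C\in\mathcal{S}^Y_{g,d}$ admits a presentation (as a suitable complete intersection, or as a zero locus of a section of a rank $2$ bundle) producing a Koszul-type resolution of its ideal; from this one obtains an exact sequence involving $N_{C/Y}$ whose other terms have known cohomology, reducing $h^1(N_{C/Y})\leq 1$ to vanishings of the form $H^1(Y,\mathcal{F})=0$ that follow from Kodaira--Kawamata--Viehweg applied on $Y$ or on the weak-Fano blowup $X$.

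Finally, the "in particular" follows by plugging in the numerics. For $Y=\PP^3$, $\dim\Aut(Y)=\dim\PGL_4(\CC)=15$, while $-K_Y\cdot C=4d\in\{32,36,40,44\}$ for the four pairs $(g,d)$; in every case $-K_Y\cdot C>15$. For $Y$ a smooth cubic $3$-fold, $\Aut(Y)$ is finite (a smooth cubic threefold has finite automorphism group), so $\dim\Aut(Y)=0$, and $-K_Y\cdot C=2d\in\{10,12\}>0$. Thus $\dim\mathcal{S}^Y_{g,d}\geq -K_Y\cdot C>\dim\Aut(Y)$ in all cases, which will be used in the next subsection to conclude that $I_{g,d}$ (and hence $I$) is uncountable.
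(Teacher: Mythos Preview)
Your overall framework matches the paper exactly: compute $\chi(N_{C/Y})=-K_Y\cdot C$ by Riemann--Roch, identify $\dim\mathcal{S}^Y_{g,d}$ with $h^0(N_{C/Y})$ at a general (hence smooth) point, and reduce everything to the bound $h^1(N_{C/Y})\le 1$. The final numerics are also identical.

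The gap is precisely where you flag ``the main obstacle'': you do not actually prove $h^1(N_{C/Y})\le 1$, only propose a case-by-case attack via Koszul-type resolutions and Kodaira-type vanishing. The paper avoids this entirely with a short, uniform argument you are missing. A general $C\in\mathcal{S}^Y_{g,d}$ lies on a smooth anticanonical K3 surface $S\in|{-K_Y}|$ (this is part of the construction in \cite{WeakFanos}, \cite{WeakFanosCubics}). The normal bundle sequence for $C\subset S\subset Y$,
\[
0\longrightarrow N_{C/S}\longrightarrow N_{C/Y}\longrightarrow N_{S/Y}|_C\longrightarrow 0,
\]
has $N_{C/S}\cong\OO_C(K_C)$ (adjunction on the K3, since $(C^2)_S=2g-2$) and $N_{S/Y}|_C\cong\OO_C((-K_Y)|_C)$. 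In the long exact sequence, $h^1(C,\OO_C((-K_Y)|_C))=0$ because by Serre duality this equals $h^0(C,\OO_C(2g-2+K_Y\cdot C))$ and $2g-2+K_Y\cdot C<0$ in every case listed; and $h^1(C,\OO_C(K_C))=h^0(C,\OO_C)=1$. Hence $h^1(N_{C/Y})\le 1$ in one stroke for all six cases, with no resolutions or case analysis needed. Replacing your unexecuted third paragraph with this K3 argument gives a complete proof along the lines you intended.
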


\begin{proof}
	By \cite[Proposition 2.8]{WeakFanos} and \cite[Proposition 3.7]{WeakFanosCubics} a general anti-canonical section $S$ containing $C$ is a smooth K3 surface (see \cite[Proposition 2.8]{WeakFanos} and \cite[Proposition 2.9]{WeakFanosCubics}). The normal bundle sequence for the embeddings $C \subset S \subset Y$ gives
	\[
	\xymatrix@C=0.5cm{
	0 \ar[r] & N_{C/S} \ar[r] & N_{C/Y} \ar[r] & N_{S/Y}|_C \ar[r] & 0.
	}
	\]
	The long exact sequence and the fact that $(C^2)_S = 2g-2$ yield
	\[
	\xymatrix@C=0.5cm@R=0.01cm{
	0 \ar[r] & H^0\left(C,\OO_C(2g-2) \right) \ar[r] & H^0(C,N_{C/Y}) \ar[r] & H^0\left(C,\OO_C(-K_X\cdot C) \right) \\
	\ar[r] & H^1\left(C,\OO_C(2g-2) \right) \ar[r] & H^1(C,N_{C/Y}) \ar[r] & H^1\left(C,\OO_C(-K_X\cdot C) \right) \ar[r] & 0.
	}
	\]
	By Serre duality, $h^1\left(C,\OO_C(-K_X\cdot C) \right) = h^0\left(C,\OO_C(2g-2+K_X\cdot C) \right)$. 
	For the six cases of $(g,d)$ and $Y$ of Proposition \ref{generalities on the involutions}, we get the following values for $2g-2+K_X\cdot C$: $-30,-26,-22,-18$ and $-12,-10$, thus $h^1\left(C,\OO_C(-K_X\cdot C) \right) = 0$.
	Similarly $h^1\left(C,\OO_C(2g-2) \right) = h^0\left(C,\OO_C \right) = 1$, thus $h^1(C,N_{C/Y})$ is either $0$ or $1$. Moreover, using the additivity of the Euler characteristic on short exact sequences and the Riemann-Roch theorem to compute we get
	\[
	h^0(C,N_{C/Y}) - h^1(C,N_{C/Y})  = -K_X\cdot C \implies -K_X\cdot C \leq h^0(C,N_{C/Y}) \leq -K_X\cdot C + 1.
	\]
	Since $C$ represents a general and thus smooth point of $\mathcal{S}^Y_{g,d}$ we get that $\dim \mathcal{S}^Y_{g,d} =h^0(C,N_{C/Y})$.
	
	For the last assertion, if $Y$ is a cubic $3$-fold, then $\dim\Aut(Y)= 0$ (see \cite{AutosOfHypersurfaces})  and we are automatically done. If $Y = \PP^3$, then in all cases $d\geq 8$ and so $-K_Y\cdot C = 4d > 15 = \dim\Aut(\PP^3)$.
\end{proof}

\begin{theorem}
	There exists a surjective group homomorphism $\psi \colon \Bir(Y) \to \freeprod_I \ZZ/2\ZZ$, which admits a section, giving the group $\Bir(Y)$ a semidirect product structure 
	\[
	\Bir(Y ) = N \rtimes \freeprod_I \ZZ/2\ZZ,
	\]
	where $N$ is the kernel of $\psi$.
\end{theorem}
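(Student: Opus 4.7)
The plan is to build $\psi$ first as a groupoid homomorphism out of $\BirMori(Y)$ using the presentation provided by Theorem \ref{elementary relations are generators}, and then restrict it to the automorphism group of the object $Y/\Spec(\CC)$, which is $\Bir(Y)$. For each $i \in I$, fix a representative curve $C_i \in \mathcal{S}^Y_{g,d}$ in the projective equivalence class $i$, giving a Sarkisov link $\chi_{C_i}$. Declare $\Psi(\phi)$ to be the generator of the $i$-th copy of $\ZZ/2\ZZ$ whenever the Sarkisov link $\phi$ is equivalent (in the sense of Remark \ref{equivalent links}) to $\chi_{C_i}$, and $\Psi(\phi) = \id$ for every other Sarkisov link and every Mori fiber space isomorphism.

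To show that $\Psi$ extends to a genuine groupoid homomorphism $\BirMori(Y) \to \freeprod_I \ZZ/2\ZZ$, I would check that $\Psi$ respects both kinds of relations listed in Theorem \ref{elementary relations are generators}. Trivial relations are handled by construction: equivalent links go to the same target element, and since every $\chi_{C_i}$ is an involution, the relation $\phi^{-1} = \psi$ is preserved (both sides land on the same generator, or both on the identity). For elementary relations $\chi_t \circ \cdots \circ \chi_1 = \id$, Corollary \ref{relations} is the crucial input: no link equivalent to some $\chi_{C_i}$ can appear in a non-trivial elementary relation, so every $\chi_j$ in such a relation is sent to $\id$ and the relation is automatically satisfied in $\freeprod_I \ZZ/2\ZZ$. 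Restricting $\Psi$ to $\Aut_{\BirMori(Y)}(Y/\Spec(\CC)) = \Bir(Y)$ then yields the desired group homomorphism $\psi$.

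For the section, define $s \colon \freeprod_I \ZZ/2\ZZ \to \Bir(Y)$ by sending the generator of the $i$-th factor to $\chi_{C_i}$. This extends to the free product by the universal property, using only the fact that $\chi_{C_i}^2 = \id$ in $\Bir(Y)$. By construction $\psi \circ s = \id$, which immediately gives surjectivity of $\psi$ and the semidirect product decomposition $\Bir(Y) = \ker(\psi) \rtimes \freeprod_I \ZZ/2\ZZ$ via $s$; uncountability of $I$ is Lemma \ref{I is uncountable}.

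The main conceptual obstacle is verifying well-definedness of $\Psi$ on elementary relations, and this is already the content of Corollary \ref{relations} — all other steps are routine once the groupoid framework is in place. A minor technical point to handle carefully is that within a single equivalence class $i \in I_{g,d}$ there may be infinitely many non-identical Sarkisov links in $\BirMori(Y)$ (produced by pre- and post-composing $\chi_{C_i}$ with isomorphisms of Mori fiber spaces, and by varying the curve within its $\Aut(Y)$-orbit); one must quotient by exactly this equivalence to get a well-defined map, which is precisely how $I$ was defined in Section \ref{section:homoAndSDProd}.
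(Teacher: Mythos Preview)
Your proposal is correct and follows essentially the same route as the paper: define $\Psi$ on the groupoid $\BirMori(Y)$ via the presentation of Theorem \ref{elementary relations are generators}, send links equivalent to $\chi_{C_i}$ to the $i$-th generator and everything else to the identity, verify the trivial relations by construction and the elementary ones via Corollary \ref{relations}, then restrict to $\Bir(Y)$ and build the section by $1_i \mapsto \chi_{C_i}$. The only cosmetic difference is that the paper phrases the verification of relations slightly more tersely, while you spell out the role of the equivalence on links a bit more explicitly.
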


\begin{proof}
	We will first define a groupoid homomorphism $\Psi\colon \BirMori(Y) \to \freeprod_I \ZZ/2\ZZ$. To do so, for each $i$ in $I$, we fix an element $C_i \in \mathcal{S}_{g,d}^Y$ in the equivalence class corresponding to $i \in I_{g,d}$. 
	The groupoid $\BirMori(Y)$ is generated by Sarkisov links and isomorphisms of Mori fiber spaces, and relations are generated by trivial and elementary ones (see Theorem \ref{elementary relations are generators}). Thus to define a groupoid homomorphism from $\BirMori(Y)$ it is enough to define it on the generators and check that all relators are mapped to the neutral element.
	With that in mind we define $\Psi$ as follows: 
	on the level of objects, $\Psi$ maps everything to the unique object of $\freeprod_I \ZZ/2\ZZ$ (when considered as a groupoid). 
	On the level of Sarkisov links and automorphisms, for each $i \in I$, $\Psi$ maps all links equivalent to $\chi_{C_i}$ (see Remark \ref{equivalent links}) to the non-zero element of the factor $i$.
	All other links and isomorphisms are mapped to the neutral element.
	Any relator not involving any link equivalent to $\chi_{C_i}$ is automatically sent to the neutral element, and the same is true for both relators of Remark \ref{trivial relations}.
	
	Define $\psi\colon \Bir(Y) \map \freeprod_I \ZZ/2\ZZ$ to be the restriction of $\Psi$ to the subgroup $\Bir(Y)$ of $\BirMori(Y)$. Since $\psi$ is the restriction of a groupoid homomorphism, it is a group homomorphism itself.
	Let $1_k$ be the non-zero element of the $k$-th factor of $\freeprod_I \ZZ/2\ZZ$, $k\in I$. Then $\psi(C_k) = 1_k$, thus the homomorphism is surjective. Conversely, we may define a section by sending $1_k$ to $\chi_{C_k}$.

\end{proof}

\begin{remark}
Using Proposition \ref{degree}, the degree of an involution $\chi_{C_i}$ and thus of an element not in the kernel of $\psi$, can be as low as $19$ in the case $Y = \PP^3$ and $11$ in the case $Y$ is a cubic $3$-fold.
\end{remark}

\subsection{Free product structure}

We now show that $\Bir(Y)$ admits a free product structure $G \ast \left(\freeprod_J \ZZ/2\ZZ\right)$.
The indexing set $J$ is defined similarly to the indexing set $I$ of the previous section: we first define $J_{g,d}^Y$ to be the set of elements of $\mathcal{S}_{g,d}^Y$ that are fixed by no non-trivial automorphism of $Y$, up to projective equivalence; then we define $J^Y$ to be the disjoint union over all pairs $(g,d)$ of Proposition \ref{generalities on the involutions} corresponding to $Y$.

A priori, it is not clear that $J^Y$ is uncountable or even non-empty. Thus we first set out to prove that $J^Y$ is uncountable. First we treat the case $Y = \PP^3$.

\begin{lemma}\label{non complete implies no autos}
	Let $C$ be a curve of genus $g \geq 2$, and let $D$ be a very ample divisor on $C$, such that $\dim \lvert D \rvert \geq 5$.

	Then for $n \geq 3$, a general $(n+1)$-dimensional subsystem $V$ of $\lvert D \rvert$ defines an embedding of $C$ in $\PP^n$ that admits no projective automorphisms. Moreover, for every such $V$, there are only finitely many other subsystems of the same dimension which are projectively equivalent. 
\end{lemma}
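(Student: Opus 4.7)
The plan is to combine the finiteness of $\Aut(C)$ (which follows from $g\ge 2$) with a dimension count on the Grassmannian $\mathcal{G}$ of $(n+1)$-dimensional subsystems of $\lvert D\rvert$. Write $N\defeq\dim\lvert D\rvert\ge 5$ and recall that $\lvert D\rvert$ embeds $C$ into $\PP^N$; any $V\in\mathcal{G}$ corresponds dually to a linear centre $L_V\subset\PP^N$, and $\iota_V$ is the composition of this embedding with the projection from $L_V$ to $\PP^n$.

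I would first verify that for a general $V\in\mathcal{G}$ the rational map $\iota_V$ is a closed embedding. For a general centre $L_V$ of the appropriate codimension, $L_V$ avoids $C$, its secant variety (of dimension at most $3$), and its tangent variety (of dimension at most $2$); the relevant generic-position inequalities hold precisely under the assumption $n\ge 3$. This produces a non-empty open locus $\mathcal{G}^{\mathrm{emb}}\subseteq\mathcal{G}$ on which $\iota_V$ is a closed embedding.

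The core step is to show that a general $V\in\mathcal{G}^{\mathrm{emb}}$ has trivial projective automorphism group. Any $\phi\in\PGL_{n+1}$ preserving $\iota_V(C)$ restricts to an automorphism $\sigma\in\Aut(C)$ that must satisfy $\sigma^*D\sim D$ and $\sigma^*V=V$ inside $H^0(C,\OO_C(D))$. Set $G_D\defeq\{\sigma\in\Aut(C):\sigma^*D\sim D\}$, a finite group (since $\Aut(C)$ is finite by Hurwitz) acting on $H^0(D)$ and hence on $\mathcal{G}$. For each non-trivial $\sigma\in G_D$, $\sigma$ is diagonalizable with $k\ge 2$ distinct eigenspaces $W_1,\dots,W_k$ of dimensions $b_1,\dots,b_k$ summing to $N+1$. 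A $\sigma$-invariant subspace decomposes as $\bigoplus_i U_i$ with $U_i\subseteq W_i$, so the fixed locus $\mathcal{G}^\sigma$ stratifies into products of Grassmannians indexed by tuples $(a_i=\dim U_i)$, each stratum of dimension $\sum_i a_i(b_i-a_i)$. A short manipulation gives
\[
\dim\mathcal{G}-\sum_i a_i(b_i-a_i)=\sum_{i\ne j}a_i(b_j-a_j)>0,
\]
using $k\ge 2$ and $V\subsetneq\lvert D\rvert$, so $\dim\mathcal{G}^\sigma<\dim\mathcal{G}$. Taking the union over the finitely many non-trivial $\sigma\in G_D$ still yields a proper closed subset of $\mathcal{G}^{\mathrm{emb}}$, and any $V$ in its complement has the required properties.

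For the final assertion, if $V'\in\mathcal{G}$ defines an embedding projectively equivalent to that of $V$ via $\phi\in\PGL_{n+1}$, setting $\sigma\defeq\iota_{V'}^{-1}\circ\phi\circ\iota_V$ produces $\sigma\in\Aut(C)$ with $\sigma^*D\sim D$, and comparing hyperplane sections forces $V'=\sigma^*V$. Hence all such $V'$ lie in the orbit $G_D\cdot V$, which has cardinality at most $|G_D|\le|\Aut(C)|<\infty$. The most delicate point of the whole argument is the strict inequality in the stratum-by-stratum dimension estimate above; the hypothesis $\dim\lvert D\rvert\ge 5$ is precisely what ensures $\dim\mathcal{G}>0$ for the relevant range $n\ge 3$.
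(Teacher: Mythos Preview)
Your proof is correct and follows essentially the same approach as the paper: reduce to the finite group $G_D=\{\sigma\in\Aut(C):\sigma^*D\sim D\}$ via Hurwitz, and show that the locus of $\sigma$-invariant subsystems in the Grassmannian is proper and closed for each non-trivial $\sigma$. The only difference is one of detail: where the paper simply remarks that the $G_D$-action on $Gr(n+1,\lvert D\rvert)$ is non-trivial (hence each fixed locus is a proper closed subvariety), you carry out the explicit eigenspace-by-eigenspace dimension count $\dim\mathcal{G}-\dim\mathcal{G}^\sigma=\sum_{i\neq j}a_i(b_j-a_j)>0$, which is a pleasant and more quantitative way to reach the same conclusion.
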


\begin{proof}
	The complete linear system $\lvert D \rvert$ defines an embedding to $\PP^N$ for some $N \geq 4$. Maps given by $(n+1)$-dimensional subsystems correspond to composition of the embedding with projections from $\PP^N$ to $n$-dimensional linear subspaces. Thus since $n \geq 3$, a general $(n+1)$-dimensional subsystem defines an embedding (see \cite[Propositions 3.4 and 3.5]{Hartshorne}).
	
	Recall that since the genus of $C$ is greater than or equal to $2$, by a classical theorem of Hurwitz (see \cite{MR1510753}) its automorphism group $\Aut(C)$ is a finite group. Denote by $G$ the subgroup $\{g\in \Aut(C) \, | \, g^*D \sim D\}$ of $\Aut(C)$.
	Let $V$ be an $n$-dimensional subspace of $\lvert D \rvert$. Then the automorphisms of $\PP^n$ acting on $C$ are exactly the elements of $G$ that leave $V$ invariant. If $G$ is trivial, we are done. If $G$ is non-trivial, then by considering the non-trivial action of $G$ on the Grassmanian $Gr(n+1,\lvert D \rvert)$, we see that being invariant under $G$ is a closed condition.	

	Finally, two embeddings corresponding to two subspaces $V_1$ and $V_2$ are projectively equivalent if and only if there exists $g \in G$ such that $g(V_1) = V_2$. Since $G$ is a finite group, so is the orbit of every element in $Gr(n+1,\lvert D \rvert)$, proving the second claim.
\end{proof}

\begin{lemma}\label{no projective autos}
	For $(g,d) \in \{(2,8),(6,9)\}$ and any curve $C$ of genus $g$, 	
	there exists uncountably many non-projectively equivalent curves in $\mathcal{S}_{g,d}^{\PP^3}$, isomorphic to $C$, that admit no non-trivial projective automorphisms.
	
	Consequently, $J^{\PP^3}_{2,8}$, $J^{\PP^3}_{6,9}$ and thus $J^{\PP^3}$ are uncountable.
\end{lemma}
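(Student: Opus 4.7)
The plan is to treat the two pairs separately, applying Lemma \ref{non complete implies no autos} directly for $(2,8)$ and giving a different argument for $(6,9)$, where the complete linear systems are too small for the lemma to apply.

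For $(g,d)=(2,8)$: any divisor $D$ of degree $8$ on a genus $2$ curve $C$ is very ample (since $\deg D > 2g$) and non-special with $\dim|D|=6\geq 5$ by Riemann--Roch. Applying Lemma \ref{non complete implies no autos} with $n=3$, a general $4$-dimensional subsystem $V\subset |D|$ defines an embedding of $C$ into $\PP^3$ as a smooth degree $8$ genus $2$ curve with no non-trivial projective automorphisms, and each projective equivalence class of such subsystems is finite. Since $Gr(4,|D|)$ is $12$-dimensional and thus uncountable, this yields uncountably many non-equivalent embeddings.

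For $(g,d)=(6,9)$: by Riemann--Roch and Clifford's theorem, any divisor of degree $9$ on a genus $6$ curve satisfies $\dim|D|\leq 4$, so Lemma \ref{non complete implies no autos} does not apply. Instead I would vary the line bundle $L$ in the $6$-dimensional abelian variety $\Pic^9(C)$. For generic $L$ one has $h^0(L)=4$, and $L$ is very ample; the latter is an open condition which is non-empty because $\mathcal{S}_{6,9}^{\PP^3}\neq \emptyset$ by Proposition \ref{generalities on the involutions}. Such an $L$ gives, up to $\PGL_4$, a canonical embedding of $C$ into $\PP^3$ of degree $9$. Two embeddings from $L,L'$ are projectively equivalent if and only if $L'\cong\sigma^*L$ for some $\sigma\in\Aut(C)$. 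For each non-trivial $\sigma$, the fixed locus $\{L \mid \sigma^*L\cong L\}$ is a proper closed subvariety of $\Pic^9(C)$, since the natural action of $\Aut(C)$ on $\Pic^0(C)$ is faithful for $g\geq 2$ and hence $\id-\sigma^*$ is a non-zero endomorphism with proper kernel. Since $\Aut(C)$ is finite by Hurwitz, the generic $L$ has trivial stabilizer and produces an embedding with no non-trivial projective automorphism.

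In both cases I would finally verify that the constructed embeddings lie in the specific component $\mathcal{S}_{g,d}^{\PP^3}$ of the Hilbert scheme and not in some other component, using the characterization of $\mathcal{S}_{g,d}^{\PP^3}$ as the irreducible component whose general element is a smooth curve with the properties of Proposition \ref{generalities on the involutions}, for instance by specializing within the family to a known element. Uncountability of $J^{\PP^3}_{g,d}$ then follows from the positive-dimensional parameter space modulo the finite group $\Aut(C)$. The main obstacle is the $(6,9)$ case: Lemma \ref{non complete implies no autos} is unavailable, and one has to control the loci in $\Pic^9(C)$ fixed by non-trivial automorphisms, which requires invoking the faithful action of $\Aut(C)$ on $\Pic^0(C)$ rather than a direct linear-system argument.
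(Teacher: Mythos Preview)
Your treatment of $(2,8)$ matches the paper's exactly: pick any degree~$8$ divisor, observe $\dim|D|\geq 5$, and apply Lemma~\ref{non complete implies no autos}.

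For $(6,9)$ you take a genuinely different route. The paper does \emph{not} abandon Lemma~\ref{non complete implies no autos}; instead it manufactures a divisor with one extra section. Concretely, it sets $D=K_C-p$ for a point $p\in C$, so that by Riemann--Roch $h^0(D)=4+h^0(\OO_C(p))=5$, and then proves that $D$ is very ample for a good choice of $p$ via a geometric argument in the canonical model (the obstruction to very ampleness is a trisecant line to $C_\kappa\subset\PP^5$ through $p$, and one checks $C_\kappa$ has only finitely many trisecants because it lies on at least six quadrics). With $\dim|D|=4$ one can then project to $\PP^3$ and invoke Lemma~\ref{non complete implies no autos} exactly as in the $(2,8)$ case. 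Your approach instead uses the \emph{complete} linear system of a generic $L\in\Pic^9(C)$ with $h^0(L)=4$ and replaces the Grassmannian variation by variation of $L$ in the Jacobian, killing projective automorphisms via the faithful action of $\Aut(C)$ on $\Pic^0(C)$. Both strategies are valid; the paper's has the advantage of reusing the same lemma uniformly, while yours avoids the trisecant analysis.

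There is one genuine gap in your argument. You justify non-emptiness of the very ample locus in $\Pic^9(C)$ by citing $\mathcal{S}_{6,9}^{\PP^3}\neq\emptyset$, but that only produces \emph{some} genus~$6$ curve carrying such an $L$, whereas the statement demands it for \emph{every} genus~$6$ curve $C$. You need an argument intrinsic to $C$. A dimension count suffices: $L$ fails to be very ample iff $L\sim K_C+p+q-E$ for some $p,q\in C$ and some effective $E$ of degree~$3$, and the image of $C\times C\times C^{(3)}\to\Pic^9(C)$ has dimension at most $5<6=\dim\Pic^9(C)$. With this fix, your argument goes through.
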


\begin{proof}
	We will do this case by case. For $(g,d) = (2,8)$, let $D$ be a divisor of degree $8$. Since $8 \geq 4 = 2g$, $D$ is very ample and non-special and by Riemann-Roch, $\dim \lvert D \rvert = 7$. By Lemma \ref{non complete implies no autos}, a general $4$-dimensional subspace of $|D|$ defines an embedding in $\PP^3$ such that $C$ admits no non-trivial  projective automorphism. A general choice of two such subspaces gives non-projectively equivalent embeddings.
	
	For $(g,d) = (6,9)$, we start with an abstract curve of genus $6$, choose a point $p$ and define the divisor $D = K_C - p$, which is of degree $9$. By the Riemann-Roch theorem we have
	\[
	h^0(C, \OO_C(D)) = 9 - 6 +1 + h^0(C,\OO_C(K_C - D)) = 4 + h^0(C, \OO_C(p)) = 5.
	\]
	We will now show that $D$ is very ample which is equivalent to showing that for any two points $r,s$ on $C$, $h^0(C,\OO_C(D-r-s)) = h^0(C,\OO_C(D)) - 2 = 3$. 
	Suppose for contradiction that $h^0(C,\OO_C(D-r-s))= h^0(C,\OO_C(K_C - p -r-s))  = 4$. 
	We consider the canonical embedding $C_{\kappa} \subset \PP^5$ of $C$. 
	The fact that $h^0(C,\OO_C(K_C - p -r-s))  = 4$ implies that the three points $p,r$ and $s$ are collinear in the canonical embedding. 
	Write $ x_0,\dots,x_5$ for the generators of $H^0(C,\OO_C(K_C))$. Then we may compute that $h^0(C,\OO_C(2K_C)) = 15$, while $S^2\left(H^0(C,\OO_C(K_C))\right) = 21$. This implies that there are at least $6$ relations among $x_0,\dots,x_5$ in degree $2$. 
	Thus, $C$ is contained in the complete intersection of $4$ quadrics and by B\'ezout's theorem so is any tri-secant line. Consequently, there are finitely many tri-secant lines.
	Choosing a point $p$ which does not lie on any tri-secant line we get that for any $r,s \in C$, $h^0(C,\OO_C(D-r-s)) = 3$ and thus $D$ is very ample. 
	Finally, we may apply Lemma \ref{non complete implies no autos} to the divisor $D$ to obtain the desired result.
		
	%
	%
\end{proof}

\begin{remark}
	For the last two cases, namely $(g,d) = (10,10)$ and $(11,14)$, the divisors that define their embedding in $\PP^3$ are special in the Brill-Noether sense.
	Thus techniques similar to those of Lemma \ref{non complete implies no autos} are difficult to apply.
	
	However we expect the statement of Lemma \ref{no projective autos} to be true for them as well: 
	using a degeneration argument this amounts to just exhibiting one such curve with no projective automorphisms.
\end{remark}

We now treat the case $Y$ is a smooth cubic $3$-fold.

\begin{lemma}\label{no autos on cubic}
	Let $Y$ be a smooth cubic $3$-fold.
	For $(g,d) \in \{(0,5),(2,6)\}$, a general element $C \in \mathcal{S}_{g,d}^Y$ is fixed by no non-trivial automorphism of $Y$ and thus $J^Y$ is uncountable.
\end{lemma}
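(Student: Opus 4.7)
The plan is to exploit the finiteness of $\Aut(Y)$. Since $Y$ is a smooth cubic $3$-fold, $\Aut(Y)$ is a finite group by \cite{AutosOfHypersurfaces}. The set $J^Y_{g,d}$ is the image, in the quotient $\mathcal{S}_{g,d}^Y/\Aut(Y)$, of the complement in $\mathcal{S}_{g,d}^Y$ of the finite union of closed fixed loci $\mathrm{Fix}(g) = \{C \in \mathcal{S}_{g,d}^Y : g(C) = C\}$, taken over non-trivial $g \in \Aut(Y)$. It will therefore suffice to show that each $\mathrm{Fix}(g)$ is a proper subset of the irreducible variety $\mathcal{S}_{g,d}^Y$; the complement is then a dense Zariski-open subset of a positive-dimensional complex variety (by Lemma~\ref{I is uncountable}), hence uncountable, and so is its image in the quotient by the finite group $\Aut(Y)$.

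I will argue by contradiction: assuming some non-trivial $g \in \Aut(Y)$ satisfies $g(C) = C$ for every $C \in \mathcal{S}_{g,d}^Y$, I will deduce $g = \id$. The first ingredient will be that the universal family $\mathcal{C} \to \mathcal{S}_{g,d}^Y$ has total dimension $\dim\mathcal{S}_{g,d}^Y + 1 \geq 11$, so the evaluation map $\mathrm{ev}\colon \mathcal{C}\to Y$ must be dominant; otherwise every curve of the family would lie in a proper closed subvariety of $Y$ of dimension at most $2$, which cannot carry an irreducible family of curves of fixed Hilbert polynomial of the required dimension.

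The second ingredient will be a dimension count on incidence loci: given dominance of $\mathrm{ev}$, passing through a prescribed point $y \in Y$ is a codimension-$2$ condition on $\mathcal{S}_{g,d}^Y$, whereas passing through two distinct prescribed points is codimension $4$. Consequently, for general $y$, no point $z \neq y$ lies on every curve of $\mathcal{S}_{g,d}^Y$ through $y$; that is, $\bigcap_{C \ni y} C = \{y\}$. Since the hypothesis $g(C) = C$ forces $g(y) \in C$ for every such $C$, we obtain $g(y) = y$ for general $y$, and hence $g = \id$, the desired contradiction.

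The main obstacle I expect is making the two dimension-theoretic inputs precise, in particular the dominance of $\mathrm{ev}$ and the independence of the codimension conditions as one lets $y$ and $z$ vary in general position. Both rest on standard deformation theory combined with the explicit lower bound $\dim \mathcal{S}_{g,d}^Y \geq -K_Y \cdot C \geq 10$ from Lemma~\ref{I is uncountable}; the technical care lies in verifying that the required generic choices can be made compatibly and avoid the finite loci where the expected codimension fails.
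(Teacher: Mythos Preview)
Your reduction---using finiteness of $\Aut(Y)$ to write the bad locus as a finite union of closed $\mathrm{Fix}(g)$ and then showing each is proper---matches the paper exactly. The divergence is in how you exclude $\mathrm{Fix}(g)=\mathcal{S}_{g,d}^Y$ for a given $g\neq\id$. The paper avoids all dimension counts by invoking the explicit construction from \cite[Section~3.3]{WeakFanosCubics}, the same one already cited to prove $\mathcal{S}_{g,d}^Y$ is non-empty: for a general point $p\in Y$ one produces a hyperquadric section $Q\subset Y$ singular at $p$, on which curves $C\in\mathcal{S}_{g,d}^Y$ through $p$ are known to exist. Given $a\neq\id$, choose $p$ general with $a(p)\neq p$ and the auxiliary hyperplane so that $a(p)\notin Q$; then $p\in C$ but $a(p)\notin C$, hence $a(C)\neq C$. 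No further work is needed.

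Your route via the universal family is reasonable in spirit, but the two ``obstacles'' you flag are the entire content, and neither is the routine deformation theory you suggest. For dominance of $\mathrm{ev}$, your justification asserts that an irreducible surface $S\subset Y$ cannot support a $\geq 10$-dimensional family of integral curves of the given $(g,d)$; this is plausible (note $\Pic(Y)=\ZZ\cdot H$ forces $S\in|mH|$, limiting the possibilities), but it requires a genuine argument and must handle singular $S$. The claim $\bigcap_{C\ni y}C=\{y\}$ is harder still: it amounts to asking that the restricted family through $y$ again has dominant evaluation with generic fibre of the expected dimension, and nothing in Lemma~\ref{I is uncountable} forces this second-order statement. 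In practice the most direct way to verify either input is to exhibit curves of the family through prescribed general points of $Y$---which is precisely what the paper's explicit construction provides. So while your outline is not wrong, completing it would likely pass through the paper's argument as a lemma.
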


\begin{proof}
	Define $G \defeq \Aut(Y)\setminus \{\id\}$ and consider the correspondence
	\[
	\mathcal{F} = \left\{(C,a) \in \mathcal{S}_{g,d}^Y \times G \,|\, a(C)=C \right\}
	\] 
	together with the projections $p_1$ and $p_2$ to the two factors.
	Notice that the subset of $\mathcal{S}_{g,d}^Y$ of curves which are fixed by some automorphism coincides with the subscheme
	\[
	F \defeq \underset{a\in G}{\bigcup} p_1\left( p_2^{-1}(a)\right) \subset \mathcal{S}_{g,d}^Y.
	\]
	By \cite{AutosOfHypersurfaces} $\Aut(Y)$ is finite and so is $G$. Thus $G$ and consequently $p_1$ are projective. This implies that $F$ is closed as the finite union of the closed subschemes  $p_1\left(p_2^{-1}(a)\right)$, $a \in G$.
	We will now show that $F \neq \mathcal{S}_{g,d}^Y$, more precisely, we will show that for any $a \in G$, there exists a $C \in \mathcal{S}_{g,d}^Y$ not fixed by $a$.

	We briefly recall a construction from \cite[Section 3.3]{WeakFanosCubics}: 
	let $p$ be a general point in $Y$ and $S$ a general hyperplane section of $Y$ not containing $p$. 
	Define the rational map $\phi \colon S \rmap Y$ by sending a point $q$ to the third point of intersection of the line through $p$ and $q$ and $Y$. 
	Then $Q \defeq\overline{\phi(S)}$ is a hyperquadric section of $Y$ singular at the point $p$. 
	Moreover, $Q$ is isomorphic to the blowup of $\PP^2$ along 12 points, all lying on a cubic curve $\Gamma$, followed by the contraction of $\Gamma$. 
	Using this construction, the authors provide examples of curves of genus $g$ and degree $d$ with $(g,d) \in \{(0,5),(2,6)\}$ lying on $Q$ and passing though $p$, satisfying the generality conditions of Proposition \ref{generalities on the involutions}.

	Now let $p$ be a general point in $Y$ such that $a(p) = q \neq p$. Choose a general hyperplane section $S$ of $Y$ not containing $p$, such that the hyperquadric section $Q$ of the previous construction does not contain $q$. Then for $(g,d) \in \left\{ (0,5),(2,6) \right\}$, we may find a curve $C \in \mathcal{S}_{g,d}^Y$ lying on $Q$ and passing though $p$. We have $p \in C$ but $a(p) \notin C$, thus $a(C) \neq C$.
\end{proof}

\begin{theorem}
	For each $j$ in $J$, we fix an element $C_j \in \mathcal{S}_{g,d}^Y$ in the projective equivalence class corresponding to $j \in I_{g,d}$. 
		
	Let $G$ be the subgroup of $\Bir(Y)$ generated by elements admitting a decomposition into Sarkisov links none of them equivalent to $\chi_{C_j}$ (see Remark \ref{equivalent links}).
	We then have
	\[
	\Bir(Y) = G \ast \left(\freeprod_{J^Y} \langle \chi_{C_j} \rangle \right) \isom G \ast \left(\freeprod_{J^Y} \ZZ/2\ZZ\right),
	\]
	where the indexing set $J$ is uncountable.
\end{theorem}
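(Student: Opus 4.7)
The plan is to show that the natural homomorphism
\[
\Phi \colon G \ast \biggl(\freeprod_{J^Y} \langle \chi_{C_j} \rangle \biggr) \to \Bir(Y),
\]
induced by sending each factor to its copy in $\Bir(Y)$, is an isomorphism; uncountability of $J^Y$ is provided by Lemmas \ref{no projective autos} and \ref{no autos on cubic}. For surjectivity, I would take $f \in \Bir(Y)$ and use Theorem \ref{elementary relations are generators}(1) to write $f$ as a composition of Sarkisov links and Mori fiber space isomorphisms. By Remark \ref{equivalent links} every Sarkisov link equivalent to some $\chi_{C_j}$ is of the form $\alpha \circ \chi_{C_j} \circ \beta$ with $\alpha, \beta \in \Aut(Y) \subset G$ and is in particular a self-map of $Y$, so the partial compositions sandwiched between successive such links are loops at $Y$ built only from Sarkisov links not equivalent to any $\chi_{C_j}$ and from Mori fiber space isomorphisms, hence lie in $G$ by definition. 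This rewrites $f$ as an alternating product in the target of $\Phi$.

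For injectivity, the strategy is to apply a Tietze transformation to the groupoid presentation of $\BirMori(Y)$ given by Theorem \ref{elementary relations are generators}(2). By Remark \ref{trivial relations} every Sarkisov link $\psi$ equivalent to but distinct from $\chi_{C_j}$ admits a factorisation $\psi = a \chi_{C_j} b$ with $a, b \in \Aut(Y)$, which I would use to eliminate $\psi$ from the generators. The crucial point is the uniqueness of the pair $(a,b)$: if $a_1 \chi_{C_j} b_1 = a_2 \chi_{C_j} b_2$, rearranging yields $\chi_{C_j} (b_2 b_1^{-1}) \chi_{C_j}^{-1} = a_2^{-1} a_1$, so for $c := b_2 b_1^{-1} \in \Aut(Y)$ the birational self-map $\chi_{C_j} c \chi_{C_j}^{-1}$ of $Y$ is biregular. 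Analysing the indeterminacy locus of $\chi_{C_j}$ shows that this forces $c$ to preserve $C_j$, and since $C_j \in J^Y$ has trivial stabilizer in $\Aut(Y)$ we conclude $c = \id$, whence $a_1 = a_2$ and $b_1 = b_2$.

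Once the Tietze step is performed, the generators of $\BirMori(Y)$ split into three disjoint classes --- Mori fiber space isomorphisms, Sarkisov links not equivalent to any $\chi_{C_j}$, and the $\chi_{C_j}$ themselves --- and the only surviving relations are the trivial and elementary relations among the first two classes together with $\chi_{C_j}^2 = \id$. Indeed, by Corollary \ref{relations} no elementary relation involves a $\chi_{C_j}$, and by Remark \ref{trivial relations} the only trivial relations involving $\chi_{C_j}$ are $\chi_{C_j}^2 = \id$ and the factorisations $\psi = a \chi_{C_j} b$ already absorbed by the Tietze transformation. Extracting the automorphism group at $Y$ via a spanning tree whose edges avoid the $\chi_{C_j}$ (possible since they are loops at $Y$ and their removal does not affect the connectivity of $\BirMori(Y)$), the loops built from the non-special generators present exactly $G$, while each $\chi_{C_j}$ contributes an independent $\ZZ/2\ZZ$ factor with no cross-relations. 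This yields $\Bir(Y) \cong G \ast \bigl(\freeprod_{J^Y} \ZZ/2\ZZ\bigr)$.

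The main obstacle is the uniqueness of the decomposition $\psi = a \chi_{C_j} b$, which is precisely where the defining property of $J^Y$ enters the argument; once this is secured, the rest reduces to a standard Tietze-and-spanning-tree manipulation of the groupoid presentation, built on the fact from Corollary \ref{relations} that $\chi_{C_j}$ sits in no non-trivial elementary relation.
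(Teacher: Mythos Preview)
Your approach is essentially the same as the paper's: both work from the groupoid presentation of $\BirMoriB(Y)$, eliminate Sarkisov links equivalent to some $\chi_{C_j}$ in favour of expressions $\alpha\,\chi_{C_j}\,\beta$, and use the trivial-stabilizer property of curves in $J^Y$ to show that no cross-relation between $\chi_{C_j}$ and the remaining generators survives. The paper argues this more directly (showing that every element of $\Bir(Y)$ factors and that every relator is a product of conjugates of single-factor relators), while you package the same content as an explicit Tietze transformation followed by a spanning-tree extraction of the vertex group at $Y$; your uniqueness-of-$(a,b)$ step is exactly the paper's analysis of the relation $a\,\chi_{C_j}\,b = \chi_{C_j}$.

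There is one slip in your surjectivity paragraph: a Sarkisov link equivalent to $\chi_{C_j}$ need not have source and target equal to $Y$, so the $\alpha,\beta$ from Remark~\ref{equivalent links} are a priori only Mori fiber space isomorphisms, not elements of $\Aut(Y)$. This does not damage the argument --- since $\chi_{C_j}$ itself is a loop at $Y$, absorbing $\alpha$ and $\beta$ into the adjacent partial compositions still produces loops at $Y$ built from non-special links and Mori fiber space isomorphisms, hence elements of $G$. The paper only invokes $a,b\in\Aut(Y)$ at the later stage of the relation $a\,\chi_{C_j}\,b=\chi_{C_j}$, where both sides genuinely have source and target $Y$.
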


\begin{proof}
	The groupoid $\BirMori(Y)$ is generated by Sarkisov links and isomorphisms of Mori fiber spaces, and relations are generated by trivial and elementary ones (see Theorem \ref{elementary relations are generators}). 
	Every link equivalent to $\chi$ (see Remark \ref{equivalent links}) is of the form $a\circ \chi \circ b$ and is thus redundant in the generation of the groupoid. 
	Thus we may take as generators Mori fiber space isomorphisms, as well as all Sarkisov links that are either $\chi_{C_j}$ or they are not equivalent to $\chi_{C_j}$ for any $j \in J$.
	Moreover, by replacing links equivalent to $\chi_{C_j}$ by $a\circ \chi_{C_j} \circ b$ in all relations, for any $j \in J$, we see that the only generating relations involving $\chi_{C_j}$ of Remark \ref{trivial relations} are $\chi_{C_j}^2=\id_Y$ and $a\circ \chi_{C_j}\circ b=\chi_{C_j}$. In the second relation, the target and the source of $\chi_{C_j}$ being $Y$, implies that $a,b \in \Aut(Y)$.
	However, by comparing base loci, we see that $a$ and $b$ must fix the curve $C_j$, which by our choice of $J^Y$, implies that $a = d = \id_Y$. 
	Thus the only relation among our new set of generators, involving $\chi_{C_j}$ is $\chi_{C_j}^2=\id_Y$.
	
	To show that $\Bir(Y) = G \ast \left(\freeprod_{J^Y} \langle \chi_{C_j} \rangle \right)$, we have to show that:
	\begin{enumerate}
		\item each element of $\Bir(Y)$ can be written as a product of elements in the factors of $G \ast \left(\freeprod_{J^Y} \langle \chi_{C_j} \rangle \right)$;
		\item generating relations involve only elements from a single factor of $G \ast \left(\freeprod_{J^Y} \langle \chi_{C_j} \rangle \right)$.
	\end{enumerate}
	For the former, given any element of $\Bir(Y)$ we may decompose it into Sarkisov links using the generators chosen in the previous paragraph. 
	Then factoring this decomposition by isolating all elements $\chi_{C_j}$, we get a product of elements in $G$ and $\langle \chi_{C_j} \rangle$, $j\in J$.

	As for the latter, let $r = \id_Y$ be a relator in $\Bir(Y)$. 
	As previously, $r$ is a product of conjugates of the generating relations chosen in the first paragraph, these are precisely elements of the form ${\chi_{C_j}}^2$, $j\in J$ and $R$, with $R = \id_W$ is a relator in $\BirMori(Y)$ involving none of the $\chi_{C_j}$.
	Again factoring by isolating all expressions ${\chi_{C_j}}^2$ we get that $r$ is a product of conjugates of elements of the form $r_G$ and ${\chi_{C_j}}^2$, where $r_G = \id_Y$ is a relation in $G$. 
	Thus $r$ may be generated by relators involving only elements of $G$ or $\langle \chi_{C_j} \rangle$, $j\in J$.

	For the last assertion, if $Y = \PP^3$ we conclude by Corollary \ref{no projective autos} and otherwise by Lemma \ref{no autos on cubic}.
\end{proof}

\begin{remark}\label{many isos}
	The construction of the isomorphism above depends on the choice of a curve in each projective equivalence class of $\mathcal{S}_{g,d}^Y$. Different choices give rise to different isomorphisms.
\end{remark}

\subsection{Inner and Field Automorphisms}\label{section:InnerAndFieldAutos}

We now construct a group automorphism of $\Bir(Y)$ which we show that is not a product of inner and field automorphisms (see Subsection \ref{subsection:intro nongeneration} for all the relevant definitions).

We first fix an isomorphism $\Bir(Y) \isom G \ast \left(\freeprod \ZZ/2\ZZ\right)$ among the ones constructed in the previous section (see Remark \ref{many isos}). 
Choose a non-trivial permutation $\rho$ of $J$, such that there exists $j_0 \in J_{g,d}^Y$ with ${j_0}' \defeq \rho(j_0) \in J_{g',d'}^Y$ and $(g,d) \neq (g',d')$.
We note that, whether $Y$ is $\PP^3$ or a smooth cubic $3$-fold, such a choice is always possible, for example, for $Y = \PP^3$, we can choose $(g,d)= (2,8)$ and $(g',d')=(6,9)$ since the corresponding $J_{g,d}^Y$ and $J_{g',d'}^Y$ are non-empty by Lemma \ref{no projective autos}.

We now define an automorphism $\phi = \phi(\rho)$ of $\Bir(Y)$ by sending the factor of the free product with index $j$ to that with index $\rho(j)$.
More precisely, we define the automorphism $\phi = \phi(\rho)$ on the generators of the free product by sending  $\chi_{C_j}$ with $\chi_{C_{\rho(j)}}$ and fixing all generators in $G$.

\begin{proposition}
	The automorphism $\phi$ of $\Bir(Y)$ defined above is not the composition of a  
	field automorphism $\sigma$ of $\CC$ preserving $Y$ and an inner automorphism of $\Bir(Y)$.
\end{proposition}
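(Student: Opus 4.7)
The strategy is to argue by contradiction using the free product structure provided by Theorem~\ref{main theorem 2}. Assume there exist $h \in \Bir(Y)$ and a field automorphism $\sigma$ of $\CC$ with $a_\sigma(Y) = Y$ such that $\phi(f) = h \circ b_\sigma(f) \circ h^{-1}$ for every $f \in \Bir(Y)$. Evaluating at $f = \chi_{C_{j_0}}$, and recalling that $\phi(\chi_{C_{j_0}}) = \chi_{C_{j_0'}}$ by definition of $\phi$, one obtains the conjugacy
\[
\chi_{C_{j_0'}} = h \circ b_\sigma(\chi_{C_{j_0}}) \circ h^{-1}
\]
in $\Bir(Y)$. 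The goal is to show that this identity cannot hold.

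The first step is to identify $b_\sigma(\chi_{C_{j_0}})$. Inspection of the construction in Remark~\ref{birational self-map}, combined with the formula $b_\sigma(f) = (f_0^\sigma : \dots : f_n^\sigma)$, shows that $b_\sigma$ transports the entire Sarkisov diagram associated with $C_{j_0}$ to the analogous diagram built on the curve $\sigma(C_{j_0}) \subset Y$, so that $b_\sigma(\chi_{C_{j_0}}) = \chi_{\sigma(C_{j_0})}$. Since $a_\sigma$ preserves Hilbert polynomials and every property listed in Proposition~\ref{generalities on the involutions} is invariant under scheme-theoretic isomorphism, $\sigma(C_{j_0})$ still lies in $\mathcal{S}_{g,d}^Y$ with the same pair $(g,d)$ as $C_{j_0}$. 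Moreover, if $a \in \Aut(Y)$ fixed $\sigma(C_{j_0})$, then $a_\sigma^{-1} \circ a \circ a_\sigma$ would be an element of $\Aut(Y)$ fixing $C_{j_0}$, contradicting $j_0 \in J_{g,d}^Y$. Hence $\sigma(C_{j_0})$ represents a well-defined class $j_1 \in J_{g,d}^Y$, and Remark~\ref{trivial relations} provides an element $\alpha \in \Aut(Y) \subset G$ such that $\chi_{\sigma(C_{j_0})} = \alpha \circ \chi_{C_{j_1}} \circ \alpha^{-1}$.

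Setting $k = h\alpha \in \Bir(Y)$, the displayed identity reduces to $\chi_{C_{j_0'}} = k \circ \chi_{C_{j_1}} \circ k^{-1}$, so $\chi_{C_{j_0'}}$ and $\chi_{C_{j_1}}$ are conjugate in $\Bir(Y)$. Reading this relation inside the free product decomposition
\[
\Bir(Y) = G \ast \left(\freeprod_{J^Y} \langle \chi_{C_j} \rangle\right)
\]
of Theorem~\ref{main theorem 2}, these two elements are nontrivial generators of two \emph{distinct} $\ZZ/2\ZZ$-factors, since $j_1 \in J_{g,d}^Y$ while $j_0' \in J_{g',d'}^Y$ with $(g,d) \neq (g',d')$ by the choice of $\rho$. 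The normal form theorem for free products forbids conjugacy between nontrivial elements lying in different free factors, yielding the desired contradiction.

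The main obstacle I expect is the geometric step in the second paragraph: verifying that the construction of Proposition~\ref{generalities on the involutions} is genuinely compatible with the field-theoretic twist $a_\sigma$, so that $\sigma(C_{j_0})$ remains in the specific irreducible component $\mathcal{S}_{g,d}^Y$ and still represents an element of $J_{g,d}^Y$. Once this is in place, the remainder is a purely combinatorial application of the normal form theorem, using only the disjointness of the strata $J_{g,d}^Y$ and $J_{g',d'}^Y$ guaranteed by the choice of $\rho$.
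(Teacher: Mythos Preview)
Your proof is correct and follows essentially the same route as the paper's: both evaluate at $\chi_{C_{j_0}}$, obtain a conjugacy in $\Bir(Y)$ between an involution attached to a curve of type $(g,d)$ and one attached to a curve of type $(g',d')$, and contradict this via the free product structure. The paper is terser---it moves $b_\sigma$ to the $\chi_{C_{j_0'}}$-side, asserts $g=\id_Y$ directly from the description of relations involving $\chi_{C_{j_0}}$, and then compares base loci and genera---whereas you spell out the normal-form argument explicitly, but the mechanism is the same.

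On your flagged concern: it is not actually an obstacle. Even if $\sigma(C_{j_0})$ failed to lie in the chosen component $\mathcal{S}_{g,d}^Y$, the link $\chi_{\sigma(C_{j_0})}$ would still be a Sarkisov link whose base curve has genus $g$ and degree $d$; it is therefore not equivalent to any $\chi_{C_j}$ with $j\in J_{g',d'}^Y$, and hence lies either in $G$ or in some factor $\langle\chi_{C_{j_1}}\rangle$ with $j_1\notin J_{g',d'}^Y$. In either case the normal-form theorem forbids conjugacy with $\chi_{C_{j_0'}}$, so the contradiction stands regardless.
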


\begin{proof}
	Suppose the contrary. Then for any $f \in \Bir(Y)$ we have
	\[
	\phi(f) = b_{\tau} \left( g \circ f \circ g^{-1} \right),
	\]
	where $g \in \Bir(Y)$ and $\tau$ is a field automorphism of $\CC$ (see Subsection \ref{subsection:intro nongeneration} for the definition of $b_{\tau}$).
	For $f = \chi_{C_{j_0}}$ we get
	\[
	\chi_{C_{{j_0}'}} 
	= b_{\tau} \left( g \circ \chi_{C_{j_0}} \circ g^{-1} \right) 
	\iff b_{\sigma}(\chi_{C_{{j_0}'}}) = g \circ \chi_{C_{j_0}} \circ g^{-1},
	\]	
	where $\sigma = \tau^{-1}$. 
	However, by the description of relations involving $\chi_{C_{j_0}}$, the only possible choice would be for $g = \id_Y$.
	We would then have
	\[
	b_{\sigma}(\chi_{C_{{j_0}'}}) = \chi_{C_{j_0}}.
	\]
	By comparing base loci, we get that $a_{\sigma}(C_{{j_0}'}) = C_{j_0}$. However, $a_{\sigma}(C_{{j_0}'})$ is abstractly isomorphic to $C_{{j_0}'}$ which cannot be isomorphic to $C_{j_0}$ as they have different genera, which is a contradiction.
\end{proof}

\begin{corollary}
	The automorphism group of $\Bir(Y)$ is not generated by inner automorphisms and field automorphisms preserving $Y$.
	
	Moreover, there exist elements of any order which do not lie in the subgroup generated by inner and field automorphisms: the order of $\phi(\rho)$ is equal to the order of $\rho$ and since $J$ is infinite we can find permutations of any order.
\end{corollary}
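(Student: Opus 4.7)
The plan is to derive both assertions of the corollary as direct consequences of the preceding proposition. The first assertion is immediate: the proposition produces an explicit automorphism $\phi(\rho)$ of $\Bir(Y)$ which is provably not a composition of inner automorphisms and field automorphisms preserving $Y$, so the subgroup generated by such automorphisms must be a proper subgroup of $\Aut(\Bir(Y))$.

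For the second assertion, I would first verify that the order of $\phi(\rho)$ equals the order of $\rho$. Fixing the isomorphism $\Bir(Y) \isom G \ast \freeprod_J \ZZ/2\ZZ$ used to define $\phi(\rho)$, the automorphism $\phi(\rho)$ acts on the generators of the free product by $\chi_{C_j} \mapsto \chi_{C_{\rho(j)}}$ and fixes all generators of $G$, so iterating $k$ times yields $\chi_{C_j} \mapsto \chi_{C_{\rho^k(j)}}$. By the normal form theorem for free products, distinct indices $j \in J$ correspond to distinct elements $\chi_{C_j}$ lying in distinct factors, so $\phi(\rho)^k = \id_{\Bir(Y)}$ if and only if $\rho^k = \id_J$, which gives the desired equality of orders.

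The final step is to construct, for each $n \in \NN_{\geq 1} \cup \{\infty\}$, a permutation $\rho$ of $J$ of order exactly $n$ satisfying the hypothesis of the preceding proposition, namely that $\rho$ sends some $j_0 \in J_{g,d}^Y$ to an element $j_0' = \rho(j_0) \in J_{g',d'}^Y$ with $(g,d) \neq (g',d')$. By Lemmas \ref{no projective autos} and \ref{no autos on cubic}, at least two of the sets $J_{g,d}^Y$ are uncountable, so I may pick $j_0 \in J_{g,d}^Y$ and $j_0' \in J_{g',d'}^Y$ in distinct strata and complete them to an $n$-cycle (or to an infinite cycle indexed by $\ZZ$ using further elements of $J$), fixing all remaining elements of $J$. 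The resulting $\rho$ satisfies the proposition's hypothesis, and the order computation then shows that $\phi(\rho)$ is an element of order $n$ in $\Aut(\Bir(Y))$ outside the subgroup generated by inner and field automorphisms preserving $Y$.

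I do not anticipate a substantial obstacle here: once the preceding proposition is in hand, the corollary reduces to routine manipulations of the free product structure established earlier. The only point requiring a little care is ensuring that the constructed permutation $\rho$ has exact order $n$ \emph{and} simultaneously satisfies the cross-stratum hypothesis of the proposition; the uncountability of the strata $J_{g,d}^Y$ makes this straightforward, as described above.
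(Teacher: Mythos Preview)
Your proposal is correct and follows exactly the approach the paper intends: the paper gives no separate proof of this corollary, embedding the entire argument in the statement itself (``the order of $\phi(\rho)$ is equal to the order of $\rho$ and since $J$ is infinite we can find permutations of any order''), and you have faithfully expanded that sketch, including the observation that the subgroup generated by inner and field automorphisms coincides with the set of single compositions $b_\tau \circ c_g$ since field automorphisms normalise inner ones. Your care in checking the cross-stratum hypothesis for permutations of arbitrary order is appropriate and correctly relies on the uncountability of at least two strata $J_{g,d}^Y$ established in Lemmas~\ref{no projective autos} and~\ref{no autos on cubic}.
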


\begin{remark}
	For $Y = \PP^3$ and any $\rho$ as above, the group automorphism $\phi(\rho) \colon \Cr_3(\CC) \to \Cr_3(\CC)$ is not a homeomorphism with respect to either the Zariski or the Euclidian topology on $\Cr_3(\CC)$. 
	Indeed by the results of \cite{ContinuousAutomorphisms}, any homeomorphism of $\Cr_3(\CC)$, with respect to either of the two topologies, is the composition of a field automorphism with an inner automorphism.
\end{remark}

\subsection{Extensions of the construction}
All results proven in the previous sections rely on the involutions constructed in Proposition \ref{generalities on the involutions} and their rigidity proved in Proposition \ref{bigness}. These involutions have appeared before in the literature in \cite{Cutrone-Marshburn}, \cite{WeakFanos} and \cite{WeakFanosCubics}.

The rigidity of these involutions essentially boils down to the fact that they are dominated by a smooth weak-Fano $3$-fold of anti-canonical degree $2$, which is the smallest degree possible. However, among the lists of \cite{Cutrone-Marshburn} there are several other examples of involutions of Fano $3$-folds which have the same property. Thus it is a natural question whether the whole construction extends to these cases as well.

\bibliography{rigidInvolutions}{}
\bibliographystyle{alpha}

\end{document}